\DeclareSymbolFont{bbold}{U}{bbold}{m}{n}
\DeclareSymbolFontAlphabet{\mathbbold}{bbold}
\let\mathcal\mathscr
\newtheorem{theorem}{Theorem}
\newtheorem{lemma}[theorem]{Lemma}
\newtheorem{proposition}[theorem]{Proposition}
\theoremstyle{definition}
\newtheorem*{notation}{Notation}
\numberwithin{theorem}{section}
\numberwithin{equation}{section}
\numberwithin{table}{section}
\DeclareSymbolFont{bbold}{U}{bbold}{m}{n}
\DeclareSymbolFontAlphabet{\mathbbold}{bbold}
\newcommand{\md}[1]{  \left(\textnormal{mod}\ #1\right)}
\renewcommand{\P}{\mathbb{P}}
\renewcommand{\rm}{\mathrm}
\newcommand{\fa}{\mathfrak{a}}
\newcommand{\fb}{\mathfrak{b}}
\newcommand{\fc}{\mathfrak{c}}
\newcommand{\fd}{\mathfrak{d}}
\newcommand{\fp}{\mathfrak{p}}
\newcommand{\fm}{\mathfrak{m}}
\newcommand{\fn}{\mathfrak{n}}
\newcommand{\cP}{\mathcal{P}}
\newcommand{\Q}{\mathbb{Q}}
\newcommand{\F}{\mathbb{F}}
\newcommand{\N}{\mathbb{N}}
\newcommand{\C}{\mathbb{C}}
\newcommand{\R}{\mathbb{R}}
\newcommand{\Z}{\mathbb{Z}}
\renewcommand{\l}{\left}
\renewcommand{\r}{\right}
\renewcommand{\b}{\mathbf}
\renewcommand{\c}{\mathcal}
\renewcommand{\epsilon}{\varepsilon}
\renewcommand{\gcd}{\textrm{gcd}}
\renewcommand{\leq}{\leqslant}
\renewcommand{\geq}{\geqslant}
\renewcommand{\#}{\sharp}
\renewcommand{\gg}{\ggg}
\renewcommand{\ll}{\lll}
\newcommand\NN{\mathbb{N}}
\newcommand{\OO}{\mathcal{O}}
\newcommand{\ideals}{\mathcal{I}_K}
\newcommand{\id}[1]{\mathfrak{#1}}
\newcommand{\aaa}{\id{a}}
\newcommand{\n}{\mathfrak{N}}
\DeclareMathOperator*{\Osum}{\sum{}^*}
\newcommand{\beq}[2]
{
\begin{equation}
\label{#1}
{#2}
\end{equation}
}
\begin{document}

\title[
Twists of Hooley's $\Delta$-function over number fields
]
{
Twists of Hooley's $\Delta$-function over number fields
}

\author{Efthymios Sofos}
\address{
Max Planck Institute for Mathematics\\
Vivatsgasse 7
\\
Bonn
\\
53111
\\
Germany}
\email{sofos@mpim-bonn.mpg.de}

\begin{abstract} 
We prove tight estimates for averages of the twisted Hooley $\Delta$-function over arbitrary number fields.
\end{abstract}

\subjclass[2010]{
	11N37,  	
	11L40,  	
       11N56  	
	(11D45)  
	 }

\maketitle


\section{Introduction}
\label{intro}
In his memoir~\cite{hooldel}
Hooley studied the following function,
previously brought to attention by Erd{\H{o}}s,
\beq{def:del}{
\Delta(n):=
\max_{\substack{a\in \R}}
\#\big\{d \in \N: \mathrm{e}^a<d \leq \mathrm{e}^{a+1}, 
d \mid n\big\},
\
\
(n \in \N)
.}
He showed that its
average order
is genuinely
smaller than that of the divisor function, namely
\beq{def:delkaf}{
\frac{1}{x}
\sum_{n\leq x}\Delta(n)\ll (\log x)^{\frac{4}{\pi}-1}.}
This saving enabled him to provide
diverse applications
in topics related to
Diophantine approximation, 
divisor sums and problems of Waring's type.
Further applications 
were later
found
by Vaughan~\cite{MR803211},~\cite{MR826156},
for problems of Waring's type,
by Tenenbaum~\cite{MR897352}
in the topic of Diophantine approximation,
as well as for
Chebychev's problem on
the greatest prime factor of polynomial sequences
by Tenenbaum~\cite{MR1029397}.

The problem regarding the average of $\Delta$
was revisited by
Tenenbaum~\cite{MR814148},
who
established
a strong upper bound, 
with a special corollary 
that the exponent $\frac{4}{\pi}-1$ can be replaced 
by any positive constant. 
Specifically, letting  \[
\widehat{\epsilon}(x)
=\sqrt{\frac{\log \log \log (16+x)}{\log \log (3+x)}},
\] 
for any $x\geq 1$,
enables us to state his result,
namely
\beq{eq:first}
{
\frac{1}{x}
\sum_{n\leq x}\Delta(n)\ll (\log x)^{O(\widehat{\epsilon}(x))}.
}
In this paper we are interested in
generalisations
of 
functions 
similar to $\Delta$ 
over arbitrary number fields. 
Let $K$ 
be any number field with ring of integers denoted by $\OO_K$.
The symbol
$\ideals$  will be
reserved for
the monoid of non-zero integral ideals
of $\OO_K$,
while
$\n \fa=\#\OO_K/\fa$
will always refer to the ideal norm of $\aaa \in \ideals$. 
The generalisation of~\eqref{def:del} to $K$
 is given 
by
\[
\Delta_K(\fa):=
\max_{\substack{a \in \R}}
\#\big\{\fd \in \ideals:
\rm{e}^a  
<
\n\fd
\leq \rm{e}^{a+1} ,
\fd \mid \fa
\big\},
\
\
(\fa \in \ideals)
.\]  
The following result and its proof 
are entirely due to
Professors 
R\'{e}gis de la Bret\`eche
and
G\'{e}rald  Tenenbaum~\cite{letter}.
\begin{theorem}
\label{thm:main1} 
There exists a positive constant $c=c(K)$   
such that 
\[\frac{1}{x}
\sum_{\substack{ 
\n \fa  \leq x
}} 
\Delta_K(\fa)
\ll (\log x)^{c\widehat{\epsilon}(x)}
,\] 
where the implied constant is allowed to depend on $K$.
\end{theorem}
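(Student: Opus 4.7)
The plan is to follow the strategy of~\cite{MR814148} with prime ideals of $\OO_K$ playing the role of rational primes, and with the Dedekind zeta function $\zeta_K(s)$ providing the analytic input in place of $\zeta(s)$. The starting observation is that $\Delta_K(\fa)$ admits a smooth majorant: for any fixed nonnegative $\Phi \in C_c^\infty(\R)$ with $\Phi \geq \mathbf{1}_{[0,1]}$, one has
\[
\Delta_K(\fa) \leq \sup_{\alpha \in \R} S_\Phi(\fa, \alpha),
\qquad
S_\Phi(\fa, \alpha) := \sum_{\fd \mid \fa} \Phi\!\left(\log \n\fd - \alpha\right),
\]
so the task reduces to estimating the $L^1$-mean of $\sup_\alpha S_\Phi(\fa, \alpha)$ over $\{\fa \in \ideals : \n\fa \leq x\}$.

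The core analytic step is to bound the high moments $\sum_{\n\fa \leq x} S_\Phi(\fa, \alpha)^{2k}$ uniformly in $\alpha$, with controlled dependence on $k$. Writing $\Phi$ through its Fourier representation realises each $S_\Phi(\fa, \alpha)$ as an integral, against a Schwartz-class weight, of the purely multiplicative ideal function $\tau_{it}(\fa) := \sum_{\fd \mid \fa} \n\fd^{\,it}$; the $2k$-th power is then an integral of $\prod_{j=1}^{2k} \tau_{it_j}(\fa)$, which is again multiplicative in $\fa$. The generating Dirichlet series factors as $\prod_j \zeta_K(s - it_j)$ times a Dirichlet series absolutely convergent strictly to the left of $\mathrm{Re}(s) = 1$, so the Selberg--Delange method delivers a bound of the form $x \cdot F(k, \log x)$ uniformly in $\alpha$. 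A dyadic discretisation of $\alpha \in [0, \log x]$ into $O(\log x)$ test values, followed by H\"older's inequality, converts this into an upper bound of shape $x (\log x)^{\psi(k)}$ for the target $L^1$-mean; optimising $k = k(x)$ exactly as in~\cite{MR814148} produces the exponent $c \widehat{\epsilon}(x)$.

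The main obstacle, and the reason the argument is nontrivial, lies in transcribing the combinatorial skeleton of~\cite{MR814148} -- reduction to squarefree kernels, a grouping of primes by $\log\log \n\ppp$-windows, and an entropy-type inequality controlling divisor concentration -- to the ideal-theoretic setting. Several prime ideals of $\OO_K$ may sit above a single rational prime and therefore cluster within a logarithmic scale of width $O(\log [K:\Q])$, forcing a ramification-aware bookkeeping both in the grouping step and in the Euler product manipulations. These adjustments contribute only multiplicative constants depending on $K$, absorbed into $c = c(K)$ via the residue of $\zeta_K(s)$ at $s=1$ and finitely many local corrections, and constitute the principal novelty beyond a verbatim transcription of the rational argument.
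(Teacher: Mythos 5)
The paper takes a completely different, much shorter route, due to de la Bret\`eche and Tenenbaum: rather than rebuilding Tenenbaum's machinery over $K$, it reduces Theorem~\ref{thm:main1} to a known result over $\Q$. After a standard reduction one is left to bound $(\log x)^{-1}\sum_{n\leq x}\mu(n)^2 n^{-1}\sum_{\n\fa=n}\Delta_K(\fa)$. For squarefree $n=\n\fa$ the prime ideal divisors of $\fa$ all have residue degree one and lie over pairwise distinct rational primes, so ideal divisors of $\fa$ correspond bijectively to integer divisors of $n$ and $\Delta_K(\fa)=\Delta(n)$; the inner sum is therefore $r_K(n)\Delta(n)$ with $r_K(n)$ the ideal-counting function of $K$. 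Dedekind--Kummer then bounds $r_K(n)$ by $\varrho_f(n)$, the number of roots mod $n$ of a defining polynomial $f$ for $K$, and the result follows at once from~\cite[Th.\,3]{MR1029397}, which already gives the $\widehat{\epsilon}$-exponent for $\sum_{n\leq x}\mu(n)^2\varrho_f(n)\Delta(n)/n$. Your plan of transcribing~\cite{MR814148} wholesale to $\OO_K$ is in fact what the paper is forced to do for Theorem~\ref{thm:main2}, where no such reduction to $\Q$ exists; for Theorem~\ref{thm:main1} it is viable in spirit but far heavier than necessary.

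Beyond the mismatch in strategy, there is a concrete error in your analytic step. You assert that the Dirichlet series of $\fa\mapsto\prod_{j=1}^{2k}\tau_{it_j}(\fa)$ factors as $\prod_{j}\zeta_K(s-it_j)$ times something absolutely convergent to the left of $\mathrm{Re}(s)=1$. It does not: at a prime ideal $\fp$ one has
\[
\prod_{j=1}^{2k}\bigl(1+\n\fp^{it_j}\bigr)=\sum_{S\subseteq\{1,\ldots,2k\}}\n\fp^{\,i\sum_{j\in S}t_j},
\]
so the leading singularity is governed by $\prod_{S\subseteq\{1,\ldots,2k\}}\zeta_K\bigl(s-i\sum_{j\in S}t_j\bigr)$, i.e.\ $2^{2k}$ zeta factors, with those corresponding to $S=\emptyset$ and to $|S|\geq 2$ missing from your product. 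A Selberg--Delange evaluation of the moment would therefore produce a power of $\log x$ of order $2^{2k}$, and after H\"older's inequality and optimisation in $k$ this cannot yield a $(\log x)^{O(\widehat{\epsilon}(x))}$ saving. Tenenbaum's actual moment argument, and the one the paper carries out in \S\ref{s:nonprincipal} for Theorem~\ref{thm:main2}, does not proceed by a single Selberg--Delange computation: it sets up a recursive inequality showing that adjoining one more prime ideal factor to $\fa$ multiplies a suitably normalised moment $M_{2q}(\fa)^{1/q}$ by a factor $\mathrm{e}^{O(1/q)}$, and then iterates this over the $\approx\log\log x$ prime factors of a typical $\fa$. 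That iteration is the engine of the saving, and it is absent from your sketch.
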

To show Theorem~\ref{thm:main1},
De la Bret\`eche
and
Tenenbaum
begin
by showing that
\[\frac{1}{x}
\sum_{\substack{ 
\n \fa  \leq x
}} 
\Delta_K(\fa)
\ll
\frac{1}{\log x} \sum_{n\leq x} \frac{\mu(n)^2}{n} \sum_{\n \fa=n} \Delta_K(\fa)
\]
in a way that is familiar to experts (see, for example~\cite{MR1618321}).
They now observe that for square-free $n$,
if there exists $d\in \N$ with $d \mid \n \fa$ then there exists a unique ideal $\fd$ with 
$\fd \mid n$, thus implying that 
$\Delta_K(\fa)=\Delta(n)$ in the last sum.
This shows that 
\[
\frac{1}{x}
\sum_{\substack{ 
\n \fa  \leq x
}} 
\Delta_K(\fa)
\ll
\frac{1}{\log x} \sum_{n\leq x} \frac{\mu(n)^2}{n} 
r_K(n)
\Delta(n)
,\]
where  
\beq{eq:rkm}
{
r_K(n):=\#\big\{\fd \in \ideals:\n\fd=n\big\},
\
\
(n \in \N)
.}
Next, choosing a monic irreducible polynomial
$f \in \Z[X]$ such that $K=\Q(\theta)$
for a root $\theta$ of $f$,
one can use the Dedekind--Kummer theorem 
to relate the function 
$r_K$ 
and $\varrho_f$,
defined by 
\[
\varrho_f(n):=\#\{t\in \Z/n\Z:f(t)\equiv 0\md{n} \}
,\]
in particular,
getting
\[
 \sum_{n\leq x} \frac{\mu(n)^2}{n} 
r_K(n)
\Delta(n)\ll
 \sum_{n\leq x} \frac{\mu(n)^2}{n} 
\varrho_f(n)
\Delta(n)
.\]
As a last step, they applied~\cite[Th.3]{MR1029397}
to establish
Theorem~\ref{thm:main1},
and, in addition, they showed that one can take 
any fixed constant $c>\sqrt{2}$.

Next, let 
$\psi_K$ be any quadratic Dirichlet character on $K$
and define
\beq{def:chi}{
\Delta_K(\fa;\psi_K):=
\sup_{\substack{a \in \R\\ 0\leq b\leq 1}}
\Big|
\sum_{\substack{
\fd\mid \fa
\\
\rm{e}^a  
<
\n\fd
\leq \rm{e}^{a+b} 
}}
\psi_K(\fd)
\Big|
,
\
\
(\fa \in \ideals).}
When 
$K=\Q$
this function was 
considered by
La Bret\`eche
and
Tenenbaum~\cite{MR2927803}, as well as 
by
Br{\"u}dern~\cite{bruedern}.
Their work culminates in the bound,
\beq{eq:tenenbaum}
{
\frac{1}{x}
\sum_{ n\leq x}\Delta_\Q(n;\psi_\Q)^2\ll (\log x)^{O(\widehat{\epsilon}(x))}
.}
In this paper
we
generalise this to any number field
by following the
arguments in~\cite{MR2927803}.
\begin{theorem}
\label{thm:main2}  
Let 
$\psi_K$ be a 
quadratic 
Dirichlet character defined
on any number field $K$.
There exists a positive constant $c=c(K,\psi_K)$  
such that 
\[
\frac{1}{x}
\sum_{\substack{
\n \fa  \leq x
}} 
\Delta_K(\fa;\psi_K)^2 
\ll 
(\log x)^{c\widehat{\epsilon}(x)}
,\]
where the implied constant is allowed to depend on $K$ and $\psi_K$. 
\end{theorem}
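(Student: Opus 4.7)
The plan is to mirror the strategy that de la Bret\`eche and Tenenbaum used for Theorem~\ref{thm:main1}, adapted to handle the character $\psi_K$ and the squaring. First, I would establish the preliminary reduction
\[
\frac{1}{x} \sum_{\n\fa \leq x} \Delta_K(\fa;\psi_K)^2
\ll
\frac{1}{\log x} \sum_{n\leq x} \frac{\mu(n)^2}{n} \sum_{\n\fa = n} \Delta_K(\fa;\psi_K)^2
\]
by the standard Rankin-type manipulation of~\cite{MR1618321} that underlies the proof of Theorem~\ref{thm:main1}; the trivial majorisation $\Delta_K(\fa;\psi_K)\leq \Delta_K(\fa)$, together with the submultiplicativity of $\Delta_K(\,\cdot\,;\psi_K)^2$ on coprime ideals, legitimises the same Dirichlet-series manipulations used there.

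Next, I would exploit the restriction to \emph{squarefree} $n$ in the outer sum. For such $n$ the norm map induces a bijection between the ideal divisors of any $\fa$ with $\n\fa = n$ and the integer divisors of $n$; this lets me attach to each such $\fa$ the completely multiplicative $\{\pm 1\}$-valued function $\chi_\fa$ defined on divisors of $n$ by $\chi_\fa(p) = \psi_K(\ppp_p)$, where $\ppp_p$ is the unique prime ideal divisor of $\fa$ of norm $p$. Under this identification
\[
\Delta_K(\fa;\psi_K)
= \sup_{a\in\R,\, 0\leq b\leq 1}\Big|\sum_{\substack{d\mid n\\ \mathrm{e}^a < d\leq \mathrm{e}^{a+b}}} \chi_\fa(d)\Big|,
\]
and consequently
\[
\sum_{\n\fa = n} \Delta_K(\fa;\psi_K)^2 \;\leq\; r_K(n)\, \sup_\chi \Delta(n;\chi)^2,
\]
where $\Delta(n;\chi)$ is the $\Q$-analogue of~\eqref{def:chi} and the supremum is taken over all completely multiplicative $\{\pm 1\}$-valued functions on $\N$. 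A Dedekind--Kummer step identical to the one in the proof of Theorem~\ref{thm:main1} then replaces $r_K(n)$ by $\varrho_f(n)$.

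The heart of the argument is therefore to establish the uniform bound
\[
\frac{1}{\log x}\sum_{n\leq x} \frac{\mu(n)^2}{n}\, \varrho_f(n)\, \sup_\chi \Delta(n;\chi)^2 \;\ll\; (\log x)^{c\widehat{\epsilon}(x)}.
\]
For this I would reproduce the proof of~\eqref{eq:tenenbaum} from~\cite{MR2927803}, tracking the dependence on $\chi$ carefully. Its main ingredients---a hyperbola-type decomposition, reduction to a high even moment of the character sum, and Hal\'asz-type bounds for multiplicative functions---use only complete multiplicativity and the pointwise estimate $|\chi|\leq 1$, so they should each produce a bound that is independent of the particular $\chi$; the multiplicative weight $\varrho_f(n)$ is then absorbed exactly as in~\cite[Th.3]{MR1029397}.

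I expect the principal obstacle to be precisely this uniformity check: confirming that the moment estimates in~\cite{MR2927803}, which are stated for the fixed character $\psi_\Q$, remain valid when the character is allowed to vary with $n$ (and in particular to be cut off at the primes dividing $n$). A secondary, bookkeeping-flavoured difficulty is tracking how the constant $c$ acquires its dependence on $K$ and on the conductor of $\psi_K$ through the Dedekind--Kummer reduction.
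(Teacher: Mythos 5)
The reduction to squarefree $n$ and the identification $\Delta_K(\fa;\psi_K)=\Delta(n;\chi_\fa)$ for a completely multiplicative $\{\pm1\}$-function $\chi_\fa$ are both correct, and this is indeed a genuinely different route from the paper, which re-runs the entire argument of~\cite{MR2927803} directly over $K$ (reducing first to squarefree ideals supported on degree-one primes via Lemma~\ref{lem:simplif}, then carrying out the induction on $\omega_K$ in~\S\ref{s:nonprincipal}). Unfortunately, the step where you pass to $\sup_\chi\Delta(n;\chi)^2$ over \emph{all} completely multiplicative $\{\pm1\}$-valued $\chi$ destroys the argument, and your optimism in the sentence ``use only complete multiplicativity and the pointwise estimate $|\chi|\leq 1$, so they should each produce a bound that is independent of the particular $\chi$'' is exactly where the gap lies.

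The method of~\cite{MR2927803} is \emph{not} uniform in $\chi$. Its key analytic ingredient (which appears here as Lemma~\ref{lem:2bmv}) is a bound of the form
\[
\sum_{p\leq\Gamma}\frac{|1+\chi(p)p^{i\vartheta}|^2}{p}\leq 2\log\log\Gamma+O\big(\log\log(2+|\vartheta|)\big),
\]
and the crucial constant $2$ (rather than $4$) in front of $\log\log\Gamma$ comes from the equidistribution of $\chi(p)$ between $+1$ and $-1$: in the paper this is supplied by quantitative Chebotarev applied to the quadratic character $\psi_K$. Your supremum over all $\chi$ includes $\chi\equiv 1$, for which $\sum_{p\leq\Gamma}|1+p^{i\vartheta}|^2/p\to 4\log\log\Gamma$ as $\vartheta\to 0$, so the lemma is simply false uniformly in $\chi$. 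Worse, for $\chi\equiv 1$ one has $\Delta(n;\chi)=\Delta(n)$, so the uniform bound you want would force $\frac1x\sum_{n\leq x}\Delta(n)^2\ll(\log x)^{O(\widehat\epsilon(x))}$, a second-moment estimate for the untwisted $\Delta$-function that the whole edifice here carefully avoids claiming — only the first moment of the untwisted $\Delta$ (Theorem~\ref{thm:main1}) and the second moment of the \emph{twisted} $\Delta$ (Theorem~\ref{thm:main2}) are known to satisfy such bounds, because the twist is the source of the cancellation. To make your route work you would need to exploit the fact that the $\chi_\fa$ actually arising are not arbitrary but inherit balance from $\psi_K$, and tracking that balance requires exactly the Chebotarev input of Lemma~\ref{lem:2bmv} — at which point you are back to redoing the number-field argument rather than reducing to~\cite{MR2927803} as a black box.
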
 
Professors 
R\'{e}gis de la Bret\`eche
and
G\'{e}rald  Tenenbaum
have informed us that there may
be a neater 
way to prove
Theorem~\ref{thm:main2}
without simply translating their work~\cite{MR2927803}
to number fields, which is what we do in the present paper.
It must be noted that 
one cannot directly
deduce
Theorem~\ref{thm:main2} for all number fields $K$
from the work in~\cite{MR2927803}
in the same way
that 
Theorem~\ref{thm:main1}
was deduced from~\cite[Th.3]{MR1029397}.
This
 direct
deduction can only be made 
if $K$ is such that 
there exists a Dirichlet character $\chi$
defined in $\Q$ 
and satisfying
\beq{eq:kummerdedek}{
\fa \in \ideals
\Rightarrow
\psi_K(\fa)
=
\chi(\n \fa)
.}
However,~\eqref{eq:kummerdedek}
does
not hold for every $K$ and
$\psi_K$.
Indeed, let $K/\Q$
be a quadratic extension,
let $L/K$ be a quadratic extension
and define a quadratic character $\psi_K$
by defining 
$\psi_K(\fp)$ 
to be $1,-1$ or $0$ 
respectively
according to whether 
$\fp$ 
is split,
inert,
or ramifies in $L$.
If $p$ is a rational prime such that 
$p\OO_L$ is inert in $L$
and~\eqref{eq:kummerdedek}
holds,
then 
\[-1=\psi_K(\fp)
=\chi(\n \fp)
=\chi(p^2)=1
,\]
which
gives a contradiction.
We also provide a
number field $K$
for which the following
relaxed version of~\eqref{eq:kummerdedek}
fails:
assume
that there exists a 
function $g:\Z\to \C$ such that  
\beq{eq:kummerdedek2}{
\fp \in \ideals,
\n \fp
\text{ is a prime in } \Z
\Rightarrow
\psi_K(\fp)
=
g(\n \fp)
.}
Take 
\beq{ex:counter}{
K:=\Q(\sqrt{-1})
\ \text{ and } \
\psi_K(\fa):=\left(\frac{1+\sqrt{-1}}{\fa}\right)
,}
where $(\frac{1+\sqrt{-1}}{\cdot})$ is a
quadratic symbol in $K$.
For every rational 
prime $p$ 
with $p\equiv 5 \md{8}$
we have 
$p\OO_K=\fp \overline{\fp}$
for
a
prime ideal $\fp$ in $\ideals$.
Letting $\pm t$
be the two solutions of 
$t^2\equiv -1 \md{p}$,
the Dedekind--Kummer theorem
shows that 
\[
\{\psi_K(\fp),\psi_K(\overline{\fp})\}
=
\Bigg\{\!
\l(\frac{1+t}{p}\r),\l(\frac{1-t}{p}\r)
\!
\Bigg\}
,\]
where $(\frac{\cdot}{p})$
is the Legendre quadratic symbol in $\F_p$.
Note that $\n \fp=\n \overline{\fp}$
is a prime
in
$\Z$,
therefore, if~\eqref{eq:kummerdedek2}
holds, then $\psi_K(\fp)=\psi_K(\overline{\fp})$.
This means that 
\[
1=
\psi_K(\fp)\psi_K(\overline{\fp})
=\l(\frac{1+t}{p}\r)
\l(\frac{1-t}{p}\r)
=
\l(\frac{1+t^2}{p}\r)
=
\l(\frac{2}{p}\r)
.\]
Recall that $p\equiv 5\md{8}$,
therefore 
$(\frac{2}{p})=-1$. 
This
shows that~\eqref{eq:kummerdedek2} cannot hold for our $K$.

The proof of Theorem~\ref{thm:main2}, supplied in~\S\ref{s:nonprincipal},
follows closely
the approach in~\cite{MR2927803}
involving an
induction related to the number of prime ideal divisors of $\fa$.
There will only be 
minor modifications;
these 
are to take care of the fact that there may be several ideals
$\fd$
of a given norm in~\eqref{def:chi}.
Thus 
one has to deal with 
 short sums $\sum_{\fd}\psi_K(\fd)$
that 
contain an 
amount of terms
which is not necessarily bounded.
We have 
to show
that cancellation in this short sum
still occurs
in this situation
and it will turn out that 
the arguments of
La Bret\`{e}che and Tenenbaum~\cite{MR2927803}
are flexible enough to handle these issues when
suitably 
modified.

Let us finally
remark that interest in averages of
$\Delta$-functions
has lately spiked
due to applications to Manin's conjecture.
This is a central conjecture
in the area of Diophantine geometry,
introduced by Manin and his collaborators in~\cite{fmt},
whose
aim is to
provide a precise description
of the  
distribution of rational points
on Fano varieties.
However,
its status for surfaces has not yet
been fully
resolved. An important
r\^{o}le in proving the conjecture for Ch\^{a}telet surfaces
is assumed by the asymptotic estimation of divisor sums of the form
\[
\sum_{\substack{(s,t) \in \Z^2\\|s|,|t| \leq x}}
\sum_{\substack{d\in \N \\
d|F(s,t)}}
\psi_\Q(d)
,\]
as
$x\to +\infty$,
where  
$F\in \Z[s,t]$ is a separable
quartic form
and $\psi_\Q$ is a
quadratic
Dirichlet character. 
La Bret\`{e}che and Tenenbaum
used~\eqref{eq:tenenbaum}
to
handle these divisor sums
when $F$ is irreducible 
or a product of two irreducible quadratic forms
in~\cite{MR3103132},
which enabled them to prove
Manin's conjecture for two families of
Ch\^{a}telet surfaces.

In our  
 joint work~\cite{BS} with Browning, 
Theorems~\ref{thm:main1} and~\ref{thm:main2}
are used to
study divisor sums of
the shape
\beq{eq:gendiv}
{
\sum_{\substack{(s,t) \in \Z^2\\|s|,|t| \leq x}}
\Bigg(
\prod_{i=1}^n
\
\sum_{k_i|F_i(s,t)}
\l(\frac{G_i(s,t)}{k_i}\r)
\Bigg)
,}
where $F_i,G_i \in \Z[s,t]$ are appropriate
binary forms
with $\sum_{i=1}^n \deg(F_i)=4$
and
$\l(\frac{\cdot}{\cdot}\r)$ denotes the Jacobi symbol. 
As a byproduct
we provide
matching upper and lower bounds 
agreeing with Manin's conjecture
for every
Ch\^{a}telet surface 
and
every
quartic del Pezzo surface with a conic bundle structure over $\Q$. 
The example~\eqref{ex:counter}
can occur in the setting of 
quartic del Pezzo surfaces with a conic bundle structure.
Indeed, 
let
\[
\Phi_1(x_0,\ldots,x_4):=x_0x_1-x_2x_3,
\
\
\Phi_2(x_0,\ldots,x_4):=x_0^2+2x_1^2+x_2^2+x_3^2-x_4^2+x_1 x_3
,\]
and
consider the surface
$X\subset \P^4_\Q$ 
that is cut out by the system 
$\Phi_1=\Phi_2=0$.
The map $f:X\to \P^1_\Q$
given by 
\[
f(x):=
\begin{cases} 
[x_0,x_2] & \mbox{if } (x_0,x_2)\neq (0,0) \\
[x_3,x_1] & \mbox{if } (x_3,x_1)\neq (0,0)
\end{cases} 
\]
is a conic bundle morphism 
whose fibers are given by 
\[(a^2+b^2)x^2+
(a^2+ab+2b^2)y^2
=z^2
.\]
In the terminology of~\cite[\S 2]{BS}
we have 
$\theta=\sqrt{-1}, 
K=\Q(\sqrt{-1}),
G(s,t)=s^2+st+2t^2$,
as well as 
$g(x)=x^2+x+2$
and
$L=K(\sqrt{1+\sqrt{-1}})$.
Furthermore,~\cite[Lemma 2.2]{BS}
shows that  
\[h(s,t)=\sum_{k| s^2+t^2} 
\l(\frac{s^2+st+2t^2}{k}\r)
\]
can be written as a
sum
of the shape
\[\sum_{\fa \mid (s- \sqrt{-1} t)} \psi_K(\fa)
,\]
where $\psi_K$ is a quadratic character in $\Q(\sqrt{-1})$ that coincides with the one
in~\eqref{ex:counter}
because the character 
in~\eqref{ex:counter}
satisfies $\psi_K(\fp)=1$ 
if and only if $\fp$ splits in $K(\sqrt{1+\sqrt{-1}})$.
\begin{notation} 
The symbol $\fp$ will exclusively
refer throughout this paper to prime ideals in $\OO_K$
and
the residue degree of any
$\fp \subset \OO_K$
will be denoted by $f_\fp$.
We shall
make frequent use of
the multiplicative span of all linear prime ideals,
\beq{eq:span}{
\cP_K^\circ
=\{\fa\subset \OO_K: \fp\mid \fa \Rightarrow f_{\fp}=1\}
.}
The symbols 
$ \mu_K, \tau_K$
and
$\Lambda_K$
will be used for the 
M\"{o}bius, divisor and the von Mangoldt function 
on $\ideals$,
while $\omega_K$ will 
stand for
the number of distinct prime ideal divisors on $\ideals$.
Unless the contrary is explicitly stated, the implicit 
constants in Landau's $O$-notation and Vinogradov's $\ll$-notation are allowed to depend 
on $K$ and $\psi_K$ but no other parameters. 
Lastly, the notation
$f(x)\asymp g(x)$ will be taken to mean
$f(x)\ll g(x) \ll f(x)$.
\end{notation}
\subsection*{Acknowledgement}
We are grateful to G\'{e}rald Tenenbaum for his generous explanations.
We are furthermore indebted to 
R\'{e}gis de la Bret\`eche
and
G\'{e}rald  Tenenbaum
for providing us with the proof of Theorem~\ref{thm:main1}.

\section{Precursory maneuvers}
\label{s:prelim} We begin by
establishing the following property,
\beq{eq:property}
{
\fa,\fb \in \ideals \ \text{coprime}
\Rightarrow
\Delta_K(\fa\fb)\leq \tau_K(\fa)\Delta_K(\fb)
\
\text{and}
\
\Delta_K(\fa\fb;\psi_K)\leq \tau_K(\fa)\Delta_K(\fb;\psi_K)
.}
Indeed, any $\fd\mid \fa\fb$ can be written uniquely
as 
$\fd=\fd_1 \fd_2$,  where $\fd_1\mid \fa$, $\fd_2|\fb$.
Therefore 
\[
\sum_{\substack{
\fd\mid \fa\fb 
\\
\mathrm{e}^a 
<
\n\fd
\leq \mathrm{e}^{a+b} 
}}
\hspace{-0,3cm}
\psi_K(\mathfrak{d})
=
\sum_{\substack{
\fd_1\mid \fa
}}
\
\
\
\
\psi_K(\mathfrak{d_1}) 
\hspace{-0,5cm} 
\sum_{\substack{
\fd_2| \fb
\\  
\mathrm{e}^{a-\log \n\fd_1} 
<
\n\fd_2
\leq 
\mathrm{e}^{a+b-\log \n\fd_1}  
}}
\hspace{-0,5cm}
\psi_K(\mathfrak{d}_2)
\]
and a similar equality holds when $\psi_K$ is replaced by $1$.
The triangle inequality
ensures
the validity of~\eqref{eq:property}.
\begin{lemma}
\label{lem:simplif}
For any $W_0 \in \N$ and any
$f:\ideals \to \R_{\geq 0}$
define the pair of functions
\[
M(x;f):=1+\sup_{1\leq y \leq x}
\frac{1}{y}\sum_{\n\fa\leq y}f(\fa)
\]
and
\[ 
L(x,W_0;f):=1+\sup_{1\leq y \leq x}
\frac{1}{\log y}
\hspace{-0,3cm}
\sum_{\substack{\n\fa\leq y\\ \fa \in \cP_K^\circ \\ \gcd(\n\fa,W_0)=1}}
\frac{f(\fa)\mu_K(\fa)^2}{\n\fa}
.\]
If
there exists $t>0$ such that 
$
f(\fa\fb)\leq \tau_K(\fa)^t
f(\fb)
$
for all integral coprime ideals $\fa$, $\fb$
then for any $W_0 \in \N$
we have the following as $x\to \infty$,
\[
M(x;f)
\asymp_{t,W_0}
L(x,W_0;f)
.\]
\end{lemma}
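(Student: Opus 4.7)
The proof separates into two directions. For $L \ll M$, I would apply Abel summation: setting $F(y) := \sum_{\n\fa \leq y} f(\fa)$, the bound $F(y) \leq y\, M(x;f)$ for $y \leq x$ combined with partial summation yields
\[
\sum_{\n\fa \leq y} \frac{f(\fa)}{\n\fa} = \frac{F(y)}{y} + \int_1^y \frac{F(t)}{t^2}\, dt \ll M(x;f)(1+\log y).
\]
Since the sum defining $L$ is bounded above by this unrestricted sum (as $f \geq 0$), division by $\log y$ and taking the supremum over $y \leq x$ yield $L \ll M$.

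For the harder direction $M \ll L$, my plan is to introduce the canonical decomposition $\fa = \fc\fe$, where $\fc$ is the product of those primes $\fp$ of $\fa$ that lie in $\cP_K^\circ$, satisfy $\gcd(\n\fp, W_0)=1$, and appear with multiplicity exactly one in $\fa$, while $\fe=\fa/\fc$ is the remaining part. Then $\fc$ lies in the index set of $L$ (squarefree, in $\cP_K^\circ$, and coprime to $W_0$), and each prime divisor of $\fe$ is either non-linear, has $\gcd(\n\fp, W_0)\neq 1$, or appears in $\fe$ with multiplicity $\geq 2$; let $\mathcal{E}$ denote this set of $\fe$. Then $\gcd(\fc,\fe)=1$, and the submultiplicativity hypothesis gives $f(\fa) \leq \tau_K(\fe)^t f(\fc)$, yielding
\[
F(y) \leq \sum_{\fe \in \mathcal{E}} \tau_K(\fe)^t\, G(y/\n\fe),
\qquad
G(Y) := \sum_{\substack{\n\fc \leq Y,\, \fc \in \cP_K^\circ\\ \gcd(\n\fc, W_0)=1,\, \mu_K(\fc)^2=1}} f(\fc).
\]
The Dirichlet series $\sum_{\fe \in \mathcal{E}} \tau_K(\fe)^t/\n\fe$ converges by a local factor analysis: non-linear primes are sparse ($\sum_\fp 1/\n\fp \leq [K:\QQ]\sum_p 1/p^2 < \infty$), the primes $\fp$ with $\gcd(\n\fp, W_0)\neq 1$ form a finite set, and at a linear prime coprime to $W_0$ only multiplicities $k \geq 2$ appear, each contributing $O(1/\n\fp^2)$.

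It therefore remains to establish the Shiu-type estimate $G(Y) \ll Y \cdot L(Y, W_0; f)$, in the spirit of the theorems of Hal\'asz, Wirsing, and Shiu. For this I would use the identity $\log \n\fc = \sum_{\fp \mid \fc} \log \n\fp$ valid for squarefree $\fc$, together with the submultiplicativity $f(\fp\fc') \leq 2^t f(\fc')$, to derive a bound of the form
\[
\sum_\fc f(\fc) \log \n\fc \leq 2^t \sum_\fp \log \n\fp \cdot G(Y/\n\fp),
\]
combine it with the Mertens-type formula $\sum_{\fp \in \cP_K^\circ,\, \n\fp \leq Y} \log \n\fp/\n\fp = \log Y + O(1)$ (a consequence of the prime ideal theorem in $K$), and close a recursion for $G$ via partial summation. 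The main obstacle is precisely this last step: the naive Abel bound only yields $G(Y) \ll Y L(Y) \log Y$, off by a factor of $\log Y$, and cancelling this logarithmic loss requires iteratively leveraging the submultiplicativity via the Mertens estimate.
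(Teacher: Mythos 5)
Your reduction of $M\ll L$ to a Shiu-type estimate $G(Y)\ll Y\, L(Y,W_0;f)$, via the decomposition $\fa=\fc\fe$ and the convergence of $\sum_\fe \tau_K(\fe)^t/\n\fe$, captures the structure of the paper's argument (which carries out the same decomposition in two stages), and your Abel-summation treatment of the direction $L\ll M$ is correct as stated. The gap is exactly where you flag it, but the obstruction you describe is illusory: no iteration is needed, and Mertens' formula is not the right tool. Having derived
\[
\sum_{\n\fc\le Y} f(\fc)\log\n\fc \ \le\ 2^t\sum_{\fp}\log\n\fp\; G(Y/\n\fp),
\]
you should not reinsert a bound for $G$; doing so, as you observe, produces a self-similar recursion that never removes the spare logarithm. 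Instead, substitute back the definition of $G$, swap the order of summation, and apply the Chebyshev bound $\theta_K(X):=\sum_{\n\fp\le X}\log\n\fp\ll X$ furnished by the prime ideal theorem:
\[
\sum_{\fp}\log\n\fp\; G(Y/\n\fp)
\ =\ \sum_{\n\fc\le Y}f(\fc)\,\theta_K(Y/\n\fc)
\ \ll\ Y\sum_{\n\fc\le Y}\frac{f(\fc)}{\n\fc}\ \le\ Y\,L(Y,W_0;f)\log Y.
\]
This is a one-shot computation, not a recursion. You also omit how to pass from $\sum f(\fc)\log\n\fc$ back to $G(Y)$ itself: this needs a split at $\n\fc\le Y^{1/4}$, where the crude bound $f(\fc)\le\tau_K(\fc)^t\ll_t\n\fc^{1/4}$ gives a harmless $O(Y^{1/2})$ contribution, while for $\n\fc>Y^{1/4}$ one has $\log\n\fc\ge\frac{1}{4}\log Y$, so that
\[
\sum_{Y^{1/4}<\n\fc\le Y}f(\fc)\ \le\ \frac{4}{\log Y}\sum_{\n\fc\le Y}f(\fc)\log\n\fc
\ \ll\ Y\,L(Y,W_0;f).
\]
Without the swap and the split, the argument does not close.
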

\begin{proof}
Let us begin by showing that 
\beq{eq:pro1}{
\sum_{\substack{\n\fa\leq x}}
\frac{f(\fa)}{\n\fa}
\asymp_{W_0}
\hspace{-0,3cm}
\sum_{\substack{\n\fa\leq x\\ \fa \in \cP_K^\circ \\ 
\gcd(\n\fa,W_0)=1
}}
\frac{f(\fa)\mu_K(\fa)^2}{\n\fa}
.}
The non-negativity of $f$ makes the inequality $\gg$ clear.
To prove the remaining inequality we may factorise 
uniquely each $\fa\in \ideals$ as 
$\fa=\fb\fc\fd$,
where each prime ideal divisor $\fp$
of $\fb$ 
satisfies $\n\fp|W_0$
and each prime ideal factor of $\fa$ which is coprime to $W_0$ and has residue degree at least  $2$ divides $\fc$.
The property of $f$ stated in our lemma shows that
\[
\sum_{\substack{\n\fa\leq x}}
\frac{f(\fa)}{\n\fa}
\leq
\prod_{\substack{\n\fp\mid W_0}}
\l(\sum_{m=0}^{\infty}\frac{\tau_K(\fp^m)^t}{\n\fp^m}\r)
\prod_{\substack{\n\fp\leq x\\ f_\fp\neq 1}}
\l(\sum_{m=0}^{\infty}\frac{\tau_K(\fp^m)^t}{\n\fp^m}\r)
\sum_{\substack{\n\fd\leq x\\ \fd \in \cP_K^\circ \\ \gcd(\n\fd,W_0)=1}}\frac{f(\fd)}{\n\fd}
\]
and we see that the first term is $O_{t,W_0}(1)$.
Writing $\n\fp=p^g$ for a rational prime $p$
we see that 
the second product is
\[\ll
\prod_{2\leq g\leq [K:\Q]}
\prod_{p\leq x^{1/g}}
\l(\sum_{m=0}^{\infty}
\frac{(m+1)^t}{p^{gm}}\r)
\leq
\prod_{p\leq x }
\l(1+O_{t}\!\l(\frac{1}{p^2}\r)\r)^{[K:\Q]}
\ll
1 
.\]
It thus remains to show that 
\[
\sum_{\substack{\n\fd\leq x\\ \fd \in \cP_K^\circ \\ \gcd(\n\fd,W_0)=1}}\frac{f(\fd)}{\n\fd}
\ll
\sum_{\substack{\n\fd_1\leq x\\ \fd_1 \in \cP_K^\circ \\ 
\gcd(\n\fd_1,W_0)=1
}}\frac{f(\fd_1)\mu_K(\fd_1)^2}{\n\fd_1}
.\]
To this end, we may factorise uniquely each $\fd$ as $\fd_1\fd_2$ where $\fd_1$, $\fd_2$ are coprime,
$\fd_1$ is square-free
and $\fd_2$ is square-full.
We may thus infer that 
\[
\sum_{\substack{\n\fd\leq x\\ \fd \in \cP_K^\circ \\ \gcd(\n\fd,W_0)=1}}\frac{f(\fd)}{\n\fd}
\leq
\sum_{\substack{\n\fd_1\leq x\\ \fd_1 \in \cP_K^\circ \\ \gcd(\n\fd_1,W_0)=1}}\frac{f(\fd_1)\mu_K(\fd_1)^2}{\n\fd_1}
\sum_{\substack{\n\fd_2\leq x \\ \fp|\fd_2\Rightarrow \fp^2 |\fd_2}}\frac{\tau_K(\fd_2)^t}{\n\fd_2}
\]
and the proof of~\eqref{eq:pro1} is concluded by observing that 
the sum over $\fd_2$ is  
\[\leq 
\prod_{\n\fp\leq x}\l(
1+O_t\!\l(\frac{1}{\n\fp^2}\r)
\r)=O_t(1)
.\]
In light of~\eqref{eq:pro1}
it is sufficient for our lemma
to show that 
\beq{eq:simplif2}
{
M(x;f)
\asymp_{t}
1+
\sup_{1\leq y \leq x}
\frac{1}{\log y}\sum_{\substack{\n\fa\leq y}}
\frac{f(\fa)}{\n\fa}
.}
Abel's summation can be employed to prove the inequality $\gg$ in~\eqref{eq:simplif2}.
For the remaining inequality
let us factorise 
$\fa$ as $\fb\fc$ with $\fb$, $\fc$
coprime, 
$\fb$ square-free and 
$\fc$ square-full.
This yields
\[
\sum_{\n\fa\leq y}f(\fa)
\leq 
\sum_{\substack{\n\fc\leq y\\\fp|\fc\Rightarrow \fp^2|\fc}}\tau_K(\fc)^t
\sum_{\n\fb\leq y/\n\fc}f(\fb)\mu_K(\fb)^2
.\]
Therefore, if the following
holds
\beq{eq:br}{
\sum_{\n\fb\leq T}f(\fb)\mu_K(\fb)^2
\ll
T^{\frac{1}{2}}+
\frac{T}{\log T}
\sum_{\n\fb\leq T}\frac{f(\fb)}{\n\fb}
,}
then the required estimate~\eqref{eq:simplif2} becomes available thanks to
\begin{align*}
\sum_{\n\fa\leq y}f(\fa)
&\ll y\sum_{\substack{\n\fc\leq y\\\fp|\fc\Rightarrow \fp^2|\fc}}\frac{\tau_K(\fc)^t}{\n \fc}
\frac{1}{\log \frac{x}{\n\fc}}\sum_{\n\fb\leq x/\n\fc}\frac{f(\fb)\mu_K(\fb)^2}{\n\fb}
\\&\ll y \l(\sup_{1\leq y \leq x}\frac{1}{\log y}\sum_{\n\fb\leq y}\frac{f(\fb)}{\n\fb}\r)
\sum_{\substack{\n\fc\leq y\\\fp|\fc\Rightarrow \fp^2|\fc}}\frac{\tau_K(\fc)^t}{\n \fc}
\end{align*}
and
\[\sum_{\substack{\n\fc\leq y\\\fp|\fc\Rightarrow \fp^2|\fc}}\frac{\tau_K(\fc)^t}{\n \fc}
\ll
\prod_{\n\fp\leq y}\l(1+O_t\!\l(\frac{1}{\n\fp^2}\r)\r)\ll_t 1.\]
To prove~\eqref{eq:br}
we shall deploy the bound 
$f(\fb)\leq \tau_K(\fb)\ll \n\fb^{1/2}$ 
to obtain
\begin{align*}
\sum_{\n\fb\leq T}
f(\fb)\mu_K(\fb)^2
&=
\sum_{\substack{\n\fb\leq T^{1/4}}}
f(\fb)\mu_K(\fb)^2
+
\sum_{\substack{T^{1/4}<\n\fb\leq T }}
f(\fb)\mu_K(\fb)^2
\\
&
\ll  T^{1/2}
+
\sum_{\n\fb\leq T} 
f(\fb)\mu_K(\fb)^2
\frac{\log \n \fb }{\log T}
.\end{align*}
Employing
the identity $\mu_K(\fb)^2\log \n\fb=\sum_{\fb=\fc\fp}\log \n\fp$
allows us to bound the last sum by
\[
\frac{2}{\log T}
\sum_{\n\fc\leq T}
f(\fc)\mu_K(\fc)^2
\sum_{\n\fp\leq T/\n\fc} \log \n\fp
\ll
\frac{T}{\log T}
\sum_{\n\fc\leq T}
\frac{f(\fc)}{\n\fc}
,\]
where the prime number theorem for $K$
has been used. 
\end{proof} 
\begin{proposition}  
There exists a positive constant $c=c(K,\psi_K)$  
such that for any $W\in \NN$  we have
$$
\sum_{\substack{
\n \fa  \leq x,
\fa\in \cP_K^\circ  
\\
\gcd(\n \fa,W)=1
}}
\frac{\Delta_K(\fa;\psi_K)^2\mu_K(\fa)^2}{\n\fa}\ll  
(\log x)^{1+c\widehat{\epsilon}(x)}.
$$ 
The implied constant is allowed to depend on $K, W$ and the character $\psi_K$. 
\end{proposition}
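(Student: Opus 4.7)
The plan is to transplant the iteration of La Bret\`eche and Tenenbaum~\cite{MR2927803} from integers to squarefree ideals of $\OO_K$. Since Lemma~\ref{lem:simplif} applied with $f(\fa)=\Delta_K(\fa;\psi_K)^2$ (for which~\eqref{eq:property} gives the required multiplicative bound with $t=2$) reduces the unrestricted second moment of $\Delta_K(\,\cdot\,;\psi_K)$ to the twisted sum over $\fa\in\cP_K^\circ$ treated in this proposition, one can decompose the sum according to $k=\omega_K(\fa)$. Writing $T_k(x)$ for the restriction to $\omega_K(\fa)=k$, the pointwise bound $\Delta_K(\fa;\psi_K)\leq \tau_K(\fa)=2^k$ combined with classical moments of $\tau_K$ on squarefree ideals in $\cP_K^\circ$ shows that the contribution of $k$ outside a window $[\kappa_1\log\log x,\kappa_2\log\log x]$ of width $O(\sqrt{\log\log x\cdot \log\log\log x})$ is absorbed into the target $(\log x)^{1+c\widehat{\epsilon}(x)}$.

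For $k$ in the main range I would set up an inductive inequality of the shape
\[
T_k(x)\leq \l(1+\frac{C}{\log\log x}\r)T_{k-1}(x)+R_k(x),
\]
obtained by peeling off the largest prime ideal divisor $\fp$ of $\fa$. After using~\eqref{eq:property} to separate $\fp$ from the rest, the square of $\Delta_K(\fa\fp;\psi_K)$ is reopened as a bilinear form in divisor pairs $\fd_1,\fd_2\mid \fa\fp$ with $|\log(\n\fd_1/\n\fd_2)|\leq 1$, and the $\fp$-variable is summed directly. The crucial gain comes from the cancellation in
\[
\sum_{\substack{\n\fp\leq y\\ \fp\in \cP_K^\circ}}\frac{\psi_K(\fp)}{\n\fp}\ll 1,
\]
a consequence of the non-vanishing of the Hecke $L$-function $L(s,\psi_K)$ on $\Re s=1$ together with the prime ideal theorem with character twist; the restriction to $\cP_K^\circ$ is automatic from the support of $\fa$. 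Iterating the recursion through the $k\asymp \log\log x$ generations yields a factor of the form $(1+C/\log\log x)^k\ll (\log x)^{O(\widehat{\epsilon}(x))}$, which combined with the $\log y$ normalisation produces the announced bound.

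The main obstacle, emphasised in the introduction, is that the short ideal sum $\sum_{\fd}\psi_K(\fd)$ in~\eqref{def:chi} may contain an unbounded number of terms and may group several distinct divisors $\fd\mid \fa$ of a common norm, so the cancellation cannot be inherited directly from the rational argument of~\cite{MR2927803}. I would handle this by decomposing each divisor $\fd\mid \fa$ along the rational primes lying below its prime ideal factors, $\fd=\prod_p \fd^{(p)}$; the squarefree, degree-one, $W$-coprime support of $\fa$ forces the number of admissible $\fd^{(p)}$ per $p$ to be at most $2^{[K:\Q]}$, which reinstates bounded local multiplicities. The residual cancellation inside each rational-prime block is governed by the values of $\psi_K$ on the primes above $p$, which in turn depend only on the splitting type of $p$ in the conductor of $\psi_K$. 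Once this local input is secured, the harmonic and combinatorial arguments of~\cite{MR2927803} transport verbatim, with the rational Dirichlet $L$-function replaced throughout by $L(s,\psi_K)$ and squarefree integers by squarefree ideals in $\cP_K^\circ$.
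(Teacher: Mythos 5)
The paper's proof of this Proposition is a one-liner: it applies Lemma~\ref{lem:simplif} with $f(\fa)=\Delta_K(\fa;\psi_K)^2$, $t=2$, $W_0=W$ to deduce the bound immediately from Theorem~\ref{thm:main2}, which is already stated as a theorem. You correctly identify that Lemma~\ref{lem:simplif} yields $M(x;f)\asymp L(x,W_0;f)$ and that~\eqref{eq:property} supplies the multiplicativity hypothesis with $t=2$, but you then go the ``wrong'' direction: instead of reading this as the Proposition following from Theorem~\ref{thm:main2}, you launch into re-deriving the entire La Bret\`eche--Tenenbaum machinery from scratch. That is not what the paper does at this point, and it is far more work than the statement requires.

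Taking your sketch on its own terms as an alternative route (i.e. proving the Proposition directly and then deducing Theorem~\ref{thm:main2} by the same Lemma), there are two concrete gaps. First, the cancellation you invoke, $\sum_{\n\fp\le y,\,\fp\in\cP_K^\circ}\psi_K(\fp)/\n\fp\ll 1$, is much too weak. The paper's argument requires cancellation not merely in $\psi_K(\fp)$ but in the oscillatory sums $\sum_{\n\fp\le\Gamma}|1+\psi_K(\fp)\n\fp^{i\vartheta}|^2/\n\fp$ uniformly in $\vartheta$ (Lemma~\ref{lem:2bmv}); this is what enters through the Plancherel identity of Lemma~\ref{lem:plancherel} and the quantity $\tau_K^*(\fa;\psi_K)$. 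The single estimate $L(1,\psi_K)\ne 0$ does not control this, and the resulting $\vartheta$-integral is precisely where the genuine saving is harvested. Second, the recursion $T_k(x)\le(1+C/\log\log x)T_{k-1}(x)+R_k(x)$ is too coarse: the paper's induction is on a \emph{two}-parameter family $L^*_{t,m}(\sigma)$ in which the moment exponent $m\approx\sqrt{t/\log t}$ is decreased together with $t$, using H\"older's inequality and the auxiliary bound~\eqref{eq:38} at each change of $m$. Without this interplay of the two parameters the error accumulated over $k\asymp\log\log x$ steps would not sum to $(\log x)^{O(\widehat\epsilon(x))}$. Finally, your device of grouping divisors by rational primes to get a bounded local multiplicity is not what the paper uses to handle the unbounded-multiplicity issue; the paper instead discards ideals with two divisors of close norm via the exceptional sets $\c E(A)$ and $\c D_t$ and the Shiu-type bound~\eqref{eq:shiuu2}.
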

\begin{proof} 
The claim
stems 
 from
Theorem~\ref{thm:main2} by taking $f(\fa)=\Delta_K(\fa;\psi_K)^2$, $t=2$ and $W_0=W$
in Lemma~\ref{lem:simplif}.
\end{proof}
Lemma~\ref{lem:simplif}
makes possible 
to deduce Theorem~\ref{thm:main2}
from the following claim:
\textit{
For any Dirichlet quadratic character $\psi_K$
there exist positive constants $c_2,z_2$
that depend only on $K$ and $\psi_K$  
such that} 
\beq{eq:easy2}{
\sum_{\substack{
\fp|\fa \Rightarrow \n\fp>z_2 \\
\fa\in \cP_K^\circ,
\n \fa  \leq x
}}\frac{\Delta(\fa;\psi_K)^2\mu_K(\fa)^2}{\n\fa}
\ll_{K,\psi_K}
(\log x)^{1+c_2\widehat{\epsilon}(x)}.
}

For
$\fa \in \ideals$,
$u \in \R$
and
$q\in \R_{\geq 1}$
we let 
\beq{def:term}{
\Delta_K(\fa;u)
:=  
\sum_{\substack{
\fd\mid \fa
\\
\rm{e}^u
<
\n\fd
\leq \rm{e}^{u+1} 
}}
1
\
\
\
\
\
\
\text{and}
\
\
\
\
\ 
\
M_q(\fa)
:=
\int_{-\infty}^{+\infty}
\Delta_K(\fa;u)^q
\rm{d}u
.}
\begin{lemma}
\label{lem:obvi}
For all $\fa\in \ideals$ and $q\in \N$ we have
$M_q(\fa)\leq \tau_K(\fa)^q$.
\end{lemma}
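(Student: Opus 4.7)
My plan is to expand the $q$-th power pointwise in $u$ and then swap the resulting sum with the integral. For each $\fd\in\ideals$, let $I_\fd:=[\log\n\fd-1,\log\n\fd)$; this is precisely the set of $u\in\R$ for which $\mathrm{e}^{u}<\n\fd\leq\mathrm{e}^{u+1}$. The definition of $\Delta_K(\fa;u)$ in~\eqref{def:term} then reads
\[
\Delta_K(\fa;u)=\sum_{\fd\mid\fa}\one_{I_\fd}(u),
\]
so that
\[
\Delta_K(\fa;u)^q=\sum_{\fd_1,\ldots,\fd_q\mid\fa}\prod_{i=1}^{q}\one_{I_{\fd_i}}(u)=\sum_{\fd_1,\ldots,\fd_q\mid\fa}\one_{I_{\fd_1}\cap\cdots\cap I_{\fd_q}}(u).
\]

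Since every summand is non-negative, Tonelli's theorem permits interchanging the integral with the finite sum over $q$-tuples of divisors of $\fa$, giving
\[
M_q(\fa)=\sum_{\fd_1,\ldots,\fd_q\mid\fa}\bigl|I_{\fd_1}\cap\cdots\cap I_{\fd_q}\bigr|,
\]
where $|\cdot|$ denotes Lebesgue measure. The crucial (and trivial) observation is that each $I_{\fd_i}$ is an interval of length exactly $1$, and so the measure of any intersection of such intervals is at most $1$. The number of $q$-tuples $(\fd_1,\ldots,\fd_q)$ of divisors of $\fa$ equals $\tau_K(\fa)^q$, hence bounding each integrated indicator by $1$ yields $M_q(\fa)\leq\tau_K(\fa)^q$ as required.

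There is no genuine obstacle: the argument is an elementary interchange of sum and integral followed by a one-line measure comparison. The only mild care needed is the correct identification of the support interval $I_\fd$ arising from the strict/non-strict inequalities in the definition~\eqref{def:term}, which makes $|I_\fd|$ equal to $1$ and not merely at most $1$. The estimate is, of course, quite wasteful for large $q$ (it ignores the constraint that contributing tuples must all have $\n\fd_i$ lying in a common interval of logarithmic length $1$), but the uniform bound $\tau_K(\fa)^q$ is precisely the shape required in subsequent applications.
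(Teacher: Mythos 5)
Your proof is correct, but it takes a genuinely different route from the paper's. The paper bounds one factor of $\Delta_K(\fa;u)$ pointwise by $\Delta_K(\fa)$ to get $M_q(\fa)\leq\Delta_K(\fa)M_{q-1}(\fa)$, then inducts down to the base case $M_1(\fa)=\tau_K(\fa)$; combined with the trivial $\Delta_K(\fa)\leq\tau_K(\fa)$ this gives the claim (and, as a byproduct, the slightly sharper chain $M_q(\fa)\leq\Delta_K(\fa)^{q-1}\tau_K(\fa)$). You instead expand $\Delta_K(\fa;u)^q$ into a sum over $q$-tuples of divisors, integrate, and observe that each summand is the Lebesgue measure of an intersection of unit intervals, hence at most $1$. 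Both are one-liners, but yours has the virtue of producing the exact identity $M_q(\fa)=\sum_{\fd_1,\ldots,\fd_q\mid\fa}\bigl|I_{\fd_1}\cap\cdots\cap I_{\fd_q}\bigr|$, which is precisely the mechanism the paper later exploits in Lemma~\ref{lem:needed} (there the intersection is over intervals of length $2$, and the bound is kept in terms of $M_a(\fb)$ rather than discarded to the crude $\tau_K^q$). In that sense your proof is closer in spirit to the surrounding material, whereas the paper's recursive proof isolates the $\Delta_K$-bound, which is the form of the inequality reused in the step following~\eqref{eq:thusf}.
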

\begin{proof}
It is evident that 
$M_q(\fa)\leq \Delta_K(\fa)M_{q-1}(\fa)$,
hence the assertion can be validated 
by induction on $q$
upon noting that
$M_1(\fa)=\tau_K(\fa)$.
\end{proof}

\begin{lemma}
\label{lem:needed}
For each $\fb \in \ideals$ and positive integer $a$
we have  
\[
\sum_{\substack{\fd_1,\ldots,\fd_a \\ \fd_i|\fb  \\ \max \n\fb_i  < \rm{e} \min  \n\fb_i }}
\hspace{-0,7cm}
1
\leq 
2^{a+1}
M_a(\fb)
.\]
\end{lemma}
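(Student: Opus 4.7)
The plan is to realise the left-hand side as a Lebesgue integral and bound it by a length-$2$ variant of $M_a(\fb)$. The condition $\max_i \n\fd_i < \mathrm{e}\min_i \n\fd_i$ is equivalent to saying that the logarithms $\log\n\fd_i$ all lie in an interval of length strictly less than $1$. Writing $m := \min_i\log\n\fd_i$ and $M := \max_i\log\n\fd_i$, a tuple $(\fd_1,\ldots,\fd_a)$ of divisors of $\fb$ sits entirely inside $(\mathrm{e}^u,\mathrm{e}^{u+2}]$ exactly for $u\in[M-2,m)$, which has length $2-(M-m)>1$. Setting
\[
\widetilde\Delta(\fb;u) := \#\{\fd\mid\fb:\mathrm{e}^u<\n\fd\leq\mathrm{e}^{u+2}\},
\]
I would swap the order of summation and integration to obtain
\[
\sum_{\substack{\fd_1,\ldots,\fd_a\mid\fb\\\max_i\n\fd_i<\mathrm{e}\min_i\n\fd_i}} 1 \;\leq\; \int_{\RR}\widetilde\Delta(\fb;u)^a\,\mathrm{d}u,
\]
where the factor of $1$ produced by each tuple exploits the length bound above.

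The next step is to bound the right-hand side by $M_a(\fb)$. Partitioning $(\mathrm{e}^u,\mathrm{e}^{u+2}]$ as $(\mathrm{e}^u,\mathrm{e}^{u+1}]\sqcup(\mathrm{e}^{u+1},\mathrm{e}^{u+2}]$ gives $\widetilde\Delta(\fb;u)=\Delta_K(\fb;u)+\Delta_K(\fb;u+1)$. The elementary convexity bound $(x+y)^a\leq 2^{a-1}(x^a+y^a)$, valid for $x,y\geq 0$ and $a\geq 1$, combined with a translation in the second summand when integrating, yields
\[
\int_\RR\widetilde\Delta(\fb;u)^a\,\mathrm{d}u \;\leq\; 2^{a-1}\cdot 2\,M_a(\fb) \;=\; 2^a M_a(\fb) \;\leq\; 2^{a+1}M_a(\fb),
\]
which is the claimed inequality (with one power of $2$ to spare).

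There is no substantial obstacle here; the only point requiring any thought is the choice of window length. Working directly with the length-$1$ window $\Delta_K(\fb;u)$ in the integration identity would produce an integrand whose $u$-support has length $1-(M-m)$, which can be arbitrarily close to zero and therefore cannot be discarded in favour of an unweighted count of tuples. Enlarging to a length-$2$ window forces the support length above $1$ uniformly, at the modest price of a single application of convexity.
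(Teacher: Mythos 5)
Your proof is correct and is essentially the paper's argument: both realise the constrained count as $\int_{\RR}(\#\{\fd\mid\fb:\text{norm in a length-$2$ window at }u\})^a\,\mathrm{d}u$, split the length-$2$ window into two length-$1$ windows, and apply a power-mean inequality together with translation invariance of the integral to land on $M_a(\fb)$. The only cosmetic differences are that the paper works with the window $(\mathrm{e}^{u-2},\mathrm{e}^u]$ rather than your $(\mathrm{e}^u,\mathrm{e}^{u+2}]$, and it uses the cruder bound $(x+y)^a\leq 2^a(x^a+y^a)$ where you use the sharp $2^{a-1}$ constant, so you end up with a spare factor of $2$.
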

\begin{proof}
It is convenient to rewrite the last
summation condition as 
\[
2-\log\l(\frac{\max \n\fb_i  }{\min \n\fb_i  }\r)>1
,\]
hence, letting
$x^+=\max\{0,x\}$
for $x \in \R$,
we can bound the sum in the lemma by
\[
\sum_{\substack{\fd_1,\ldots,\fd_a \\ \fd_i|\fb}} \l(2-\log\l(\frac{\max \n\fb_i  }{\min \n\fb_i  }\r)\r)^+
.\]
Using 
the convention $(a,b]=\emptyset$
when
$a\geq b$ verifies
the succeeding identity for all $a,b \in \R$,
\[(b-a)^+=\int_{(a,b]}1\mathrm{d}u.\]
This provides the equality of the last sum with 
\[
\sum_{\substack{\fd_1,\ldots,\fd_a \\ \fd_i|\fb}} \int_{[\log \max \n\fb_i,2+\log \min \n\fb_i)}1\mathrm{d}u=
\int_{-\infty}^{+\infty} \bigg(\sum_{\substack{\mathrm{e}^{u-2}<\n\fd \leq \mathrm{e}^u
\\
\fd|\fb
}}1\bigg)^a\mathrm{d}u
,\]
which, upon decomposing the sum over $\fd$ as 
\[
\sum_{\substack{\mathrm{e}^{u-2}<\n\fd \leq \mathrm{e}^{u-1}\\\fd|\fb}}1
+
\sum_{\substack{\mathrm{e}^{u-1}<\n\fd \leq \mathrm{e}^u\\\fd|\fb}}1
,\]
leads to the desired
bound
\[
2^a
\int_{-\infty}^{+\infty}
\bigg(
\sum_{\substack{\mathrm{e}^{u-2}<\n\fd \leq \mathrm{e}^{u-1}\\\fd|\fb}}1
\bigg)
^a\mathrm{d}u
+
2^a
\int_{-\infty}^{+\infty}
\bigg(
\sum_{\substack{\mathrm{e}^{u-1}<\n\fd \leq \mathrm{e}^u\\\fd|\fb}}1
\bigg)^a\mathrm{d}u
,\]
that is clearly sufficient for the lemma.
\end{proof}
For integers $c$, $q$ in the range 
$1\leq c \leq q-1$
we can obtain via H\"older's inequality with exponents
$\frac{q-1}{q-c}$
and
$\frac{q-1}{c-1}$
the succeeding inequality
\[
M_c(\fb)=  
\int_{-\infty}^{+\infty} \Delta(\fb;u)^{\frac{q-c}{q-1}}   \Delta(\fb;u)^{\frac{q(c-1)}{q-1}} \mathrm{d}u
\leq 
M_1(\fb)^{\frac{q-c}{q-1}} 
M_q(\fb)^{\frac{c-1}{q-1}}
.\]
Using this for
$c=a$
and 
$c=q-a$
yields respectively
\beq{eq:2bu}
{
M_a(\fb)
\leq 
M_1(\fb)^{\frac{q-a}{q-1}} 
M_q(\fb)^{\frac{a-1}{q-1}}
\
\
\text{and}
\
\
M_{q-a}(\fb)
\leq   
M_1(\fb)^{\frac{a}{q-1}} 
M_q(\fb)^{\frac{q-a-1}{q-1}}
,}
an inequality that will be used later.

\section{The proof of Theorem~\ref{thm:main2}}
 \label{s:nonprincipal}
The bound~\eqref{eq:easy2}
will be proved 
by an induction process which is given
in \S\ref{s:indpro}.
The central result 
deployed
in this process is Proposition~\ref{p:central},
whose proof is
postponed until \S\ref{s:indpro2}.
\subsection{The induction process}
\label{s:indpro} 
Throughout \S\ref{s:nonprincipal} the positive real number $z_2=z_2(K,\psi_K)$
will be allowed to increase but it will 
be independent of the counting parameter
$x$. 
The Erd{\H{o}}s--Kac theorem for $K$ shows that 
the number of distinct prime ideal divisors
of a typical element $\fa \in \ideals$ is of size $\log \log \n\fa$,
thus suggesting  to consider the contribution of $\fa$
satisfying
 $\omega_K(\fa)>10 \log \log x$ in~\eqref{eq:easy2}. 
Using~\eqref{eq:property}
with $\fb=\OO_K$
we see that it is at most 
\[
\sum_{\substack{
\omega_K(\fa)>10 \log \log x \\
\n \fa  \leq x
}}\frac{\tau_K(\fa)^2\mu_K(\fa)^2}{\n\fa}
\leq  
\sum_{\substack{ 
\n \fa  \leq x
}}\frac{\tau_K(\fa)^2\mu_K(\fa)^2}{\n\fa}
\l(\frac{5}{2}\r)^{\omega_K(\fb)-10 \log \log x}
\]
and the inequality 
$10-10\log(\frac{5}{2})<1$
affirms the bound
\[
\l(\log x\r)^{-10\log(\frac{5}{2})}
\prod_{\n\fp\leq x}
\l(
1+\frac{10}{\n\fp}
\r)
\ll
\log x
.\]
This shows that~\eqref{eq:easy2}
stems from the estimate
\beq{eq:folws}{
\sum_{\substack{
\fp|\fa \Rightarrow \n\fp >z_2 \\
 \omega_K(\fa)\leq 10 \log \log x \\
\fa\in \cP_K^\circ,
\n \fa  \leq x
}}\frac{\Delta_K(\fa;\psi_K)^2\mu_K(\fa)^2}{\n\fa}
\ll_{K,\psi_K}
(\log x)^{1+c_2\widehat{\epsilon}(x)}
.}
We will soon replace the $\Delta$-term
by an expression 
involving an integral that approximates $\Delta_K(\fa;\psi_K)$.
The approximation can be performed when the divisors of $\fa$ are evenly spaced and we proceed 
by showing that the sum in~\eqref{eq:folws} can be restricted to $\fa$ 
with this  property.
For any $A>0$ we define   
$\c{E}(A)$ as the set of all $\fa \in \ideals$ for which there are 
distinct 
$\fd, \fd'$ with 
\[
\fd|\fa,
\fd'|\fa,
\ 
\
\
\n\fd\leq \n\fd'\leq \n\fd (1+(\log 2\n\fd)^{-A})
.\]
Assume that $A\geq 10$. Then each ideal counted in 
\beq{eq:shiuu}{
\sum_{\substack{
\fp|\fa \Rightarrow \n\fp >z_2 \\ 
\fa\in \cP_K^\circ
\cap \c{E}(A),
\n \fa  \leq x
}}\frac{\Delta_K(\fa;\psi_K)^2\mu_K(\fa)^2}{\n\fa}
}
is of the shape 
$\fa=\fd\fd'\fm$,
where $\fd,\fd',\fm$ are coprime in pairs and square-free
and satisfy
\[
\n\fd\leq \n\fd'\leq \n\fd (1+(\log 2\n\fd)^{-A})
.\]
Hence, by~\eqref{eq:property} with $\fb=\OO_K$,
the sum is bounded by
\[
\sum_{\substack{\fm,\fd \in \cP_K^\circ\\ \n \fm \n\fd \leq x }}
\frac{\mu_K(\fm)^2\mu_K(\fd)^2\tau_K(\fm)^{2}}{\n\fm \n\fd\tau_K(\fm)^{-2}}
\sum_{\substack{\fd' \in \cP_K^\circ,
\fp|\fa \Rightarrow \n\fp >z_2 \\
\n\fd\leq \n\fd'\leq \n\fd (1+(\log 2\n\fd)^{-A})
}}
\frac{\mu_K(\fd')^2\tau_K(\fd')^2}{\n\fd'}
.\]
Introducing the following
arithmetic function,
\[
f(d):=\sum_{\substack{\fd'\in \cP_K^\circ\\\n\fd'=d}}\mu_K(\fd')^2\tau_K(\fd')^2
,\]
allows us to bound the sum over $\fd'$ by  
\[
\n\fd^{-1}
\hspace{-0,3cm}
\sum_{\substack{
p|d \Rightarrow p >z_2 \\
\n\fd\leq d\leq \n\fd (1+(\log 2\n\fd)^{-A})}}f(d).
\]
Using~\cite[Th.1]{shiu} shows that the last expression is bounded by   $\n\fd^{-1}(\log \n\fd)^{-1-A}$
multiplied by a quantity that is bounded by 
\[\ll_A \exp\Big(\sum_{z_2<p\leq z_2+2\n\fd}f(p)/p\Big)\ll 
\exp\Big(\sum_{z_2<\n\fp\leq z_2+2\n\fd}4/\n\fp
\Big)\ll (\log \n\fd)^4.\]
We have thus shown that the sum in~\eqref{eq:shiuu} is
\[
\ll
\sum_{\substack{\fm \in \cP_K^\circ\\ \n \fm  \leq x }}\frac{\mu_K(\fm)^2\tau_K(\fm)^{2}}{\n\fm}
\sum_{\substack{\fd \in \cP_K^\circ\\ \n\fd \leq x }}\frac{\mu_K(\fd)^2\tau_K(\fd)^{2}}{\n\fd(\log \n\fd)^{A-3}}
\ll
(\log x)^4
\sum_{\substack{\fd \in \cP_K^\circ\\ \n\fd \leq x }}
\frac{\mu_K(\fd)^2\tau_K(\fd)^{2}}{\n\fd(\log \n\fd)^{A-3}}
.\]
By Abel's summation the sum over $\fd$ is $\ll (\log x)^{7-A}$, 
thus yielding
\beq{eq:shiuu2}{
A\geq 10 \Rightarrow
\sum_{\substack{
\fp|\fa \Rightarrow \n\fp >z_2 \\ 
\fa\in \cP_K^\circ\cap \c{E}(A),
\n \fa  \leq x
}}\frac{\tau_K(\fa)^2\mu_K(\fa)^2}{\n\fa}
\ll_A (\log x)^{11-A}
,} 
which reveals that, owing to~\eqref{eq:folws},
the next estimate is sufficient for the proof of~\eqref{eq:easy2}, 
\beq{eq:shiuu3}{
\sum_{\substack{
\n \fa  \leq x,
\fp|\fa \Rightarrow \n\fp >z_2 \\
\omega_K(\fa)\leq 10 \log \log x \\
\fa\in \cP_K^\circ, \fa \notin \c{E}(10) 
}}\frac{\Delta_K(\fa;\psi_K)^2\mu_K(\fa)^2}{\n\fa}
\ll_{K,\psi_K}
(\log x)^{1+c_2\widehat{\epsilon}(x)}
.}

The induction process that will enable us to prove~\eqref{eq:shiuu3}
requires that we are in possession 
of an ordering of the prime ideals $\fp\subset \OO_K$; thus we
 form the sequence 
$(\fp_i)_{i=1}^\infty$ 
such that  
\beq{eq:ordering}
{i<j \Rightarrow
\fp_i\neq \fp_j, \n\fp_i\leq \n\fp_{j}.
}
Prime ideals of the equal norm are allowed to be ordered
arbitrarily, but their ordering is
fixed once and for all.
Hence, for any $\fa$ we can 
set $i^+(\fa)=\max\{i\in \N:\fp_i \mid \fa\}$
and define
\[\fp^+(\fa):=\fp_{i^+(\fa)}.\]
Furthermore,
for each $r\in \N$ and square-free $\fa\in \ideals$,
we let $\fa_r:=\fa$ if $r\geq \omega_K(\fa)$.
If $r<\omega_K(\fa)$ holds then we
choose the first $r$
prime ideal divisors of $\fa$ according to the ordering above 
and let $\fa_r$ be their product.
Setting $r_x:=[10\log \log x]$ shows that the sum in~\eqref{eq:shiuu3} is
\[
\sum_{\substack{
\n \fa  \leq x,
\fp|\fa \Rightarrow \n\fp >z_2 \\
\omega_K(\fa)\leq 10 \log \log x \\
\fa\in \cP_K^\circ, \fa \notin \c{E}(10) 
}}\frac{\Delta_K(\fa_{r_x};\psi_K)^2\mu_K(\fa)^2}{\n\fa}
\leq
\sum_{\substack{
\n \fa  \leq x,
\fp|\fa \Rightarrow \n\fp >z_2 \\ 
\fa\in \cP_K^\circ 
}}\frac{\Delta_K(\fa_{r_x};\psi_K)^2\mu_K(\fa)^2}{\n\fa}
.\]
Letting for any 
$\fa \in \ideals$,
$a\in \R$
and
$b \in (0,1]$,
\beq{def:dd}{\Delta_K(\fa;\psi;a,b):=\Big|\sum_{\substack{\ \ \rm{e}^a <\n\fd\leq \rm{e}^{a+b}\\
\fd\mid \fa
}}\psi_K(\fd) \ \Big|}
sets the stage 
for the entrance of
the important entity
\beq{def:MMqq}
{
M_q(\fa;\psi_K)
:=
\int_{0}^1\int_{\R} \Delta_K(\fa;\psi;a,b)^q
\rm{d}a \rm{d}b
,
\
\
(\fa \in \ideals,
q \in \N)
.}
Let
$q_x:=[\sqrt{ r_x/(1+\log r_x)}]$
and
define for $r,q\in \N$ the average
\beq{def:cl}
{\c L(x):=
4^{\frac{r_x}{q_x}}
\log x
+\sum_{\substack{
\n \fa  \leq x,
\fa\in \cP_K^\circ 
\\
\fp|\fa \Rightarrow \n\fp >z_2
}}\frac{M_{2q_x}(\fa_{r_x};\psi_K)^{\frac{1}{q_x}}
\mu_K(\fa)^2}{\n\fa}
.}
The next lemma shows that Theorem~\ref{thm:main2} stems from
\beq{eq:shiuu4}{
\c L(x) 
\ll_{K,\psi_K}
(\log x)^{1+c_2\widehat{\epsilon}(x)}
.}
\begin{lemma}
\label{lem:e10}
For all $q \in \N$
and 
square-free $\fa\in \ideals$ 
with 
$\n\fa\leq x$ and 
$\fa\notin \c{E}(10)$
we
have 
\[
\Delta_K(\fa;\psi_K)^2
\leq 8^2
+2^{10}
(\log x)^{\frac{20}{q}}
M_{2q}(\fa;\psi_K)^{\frac{1}{q}}
.\]
\end{lemma}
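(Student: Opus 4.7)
Set $D := \Delta_K(\fa;\psi_K)$ and, for $(a,b) \in \R \times [0,1]$, write
\[
S(a,b) := \sum_{\substack{\fd \mid \fa \\ \rm{e}^a < \n\fd \leq \rm{e}^{a+b}}} \psi_K(\fd),
\]
so that $\Delta_K(\fa;\psi;a,b) = |S(a,b)|$ in the notation of~\eqref{def:dd}, and $D$ is the supremum of $|S(a,b)|$ over the allowed range. If $D \leq 8$ then $D^2 \leq 8^2$ and we are done, so assume $D > 8$. The strategy is to locate a measurable set $B \subset \R \times [0,1]$ of area at least $c_0 (\log x)^{-20}$ on which $|S(a,b)| = D$; inserting this into~\eqref{def:MMqq} will immediately yield $D^{2q} \leq c_0^{-1}(\log x)^{20}\, M_{2q}(\fa;\psi_K)$, from which the claim follows upon extracting a $q$-th root.

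The value $S(a,b)$ depends only on the finite set $\{\fd \mid \fa : \log\n\fd \in (a, a+b]\}$. Passing to the auxiliary coordinates $(a,c) := (a, a+b)$, this set is stable as long as neither $a$ nor $c$ crosses a log-norm $\log\n\fd$ with $\fd \mid \fa$, so $(a,c) \mapsto S(a, c-a)$ is piecewise constant, with discontinuities only along the finitely many vertical lines $\{a = \log\n\fd\}$ and horizontal lines $\{c = \log\n\fd\}$. The hypothesis $\fa \notin \c{E}(10)$, together with $\n\fd \leq \n\fa \leq x$, forces any two distinct such log-norms to be separated by at least
\[
\eta := \tfrac{1}{2}(\log 2x)^{-10} \asymp (\log x)^{-10},
\]
so the open cell of constancy through a point $(a^*,c^*)$ at which $|S(a^*,c^*-a^*)| = D$ has both sides of length $\geq \eta$.

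I would then intersect this cell with the feasibility strip $\{(a,c) : 0 \leq c - a \leq 1\}$, inside which $(a^*,c^*)$ already lies since $b^* := c^* - a^* \in (0,1]$; observe moreover that $|S(a^*,b^*)| = D > 8$ forces at least nine divisors of $\fa$ into $(a^*,c^*]$, whence $b^* > 8\eta$, pushing $b^*$ safely away from $0$. A short geometric case analysis, separating the generic case from the extremal one $b^* = 1$, produces a sub-rectangle $B$ of the cell intersected with the strip, of area at least $c_0 \eta^2$, on which $|S| = D$. Since $(a,c) \mapsto (a,b)$ has unit Jacobian, $B$ transports to a region of the same area in $(a,b)$-coordinates; plugging into~\eqref{def:MMqq} then gives $M_{2q}(\fa;\psi_K) \geq c_0 \eta^2 D^{2q}$, whence, using $(\log 2x)^{20/q} \leq 2^{20/q}(\log x)^{20/q}$,
\[
D^2 \leq \bigl(4c_0^{-1}\bigr)^{1/q} 2^{20/q}(\log x)^{20/q}\, M_{2q}(\fa;\psi_K)^{1/q},
\]
which fits inside the claimed $2^{10}(\log x)^{20/q} M_{2q}(\fa;\psi_K)^{1/q}$ for any suitably chosen absolute $c_0$.

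The hard part will be the geometric step: it must be carried out uniformly in $\fa$, $x$ and $q$ (so that $c_0$ really is absolute) and must handle the boundary case $b^* = 1$ cleanly, while keeping enough slack for the multiplicative constants to be absorbed into the prefactor $2^{10}$ after the $q$-th root. The remainder of the argument is a routine change of variables together with the trivial dichotomy $D \leq 8$ vs.\ $D > 8$.
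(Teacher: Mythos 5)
Your overall strategy is the same as the paper's: use the separation condition $\fa \notin \c{E}(10)$ to find a box of area $\gg (\log x)^{-20}$ on which the integrand of $M_{2q}$ is comparable to $\Delta_K(\fa;\psi_K)$, and then restrict the integral to that box. However, your insistence on producing a sub-rectangle on which $|S(a,b)|$ is \emph{exactly} equal to $D$ is both unnecessary and, in the boundary case, unachievable, and this is a genuine gap.

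Concretely: let $R = (\alpha_1,\alpha_2)\times(\gamma_1,\gamma_2)$ be the open cell of constancy in $(a,c)$-coordinates containing a point $(a^*,c^*)$ with $|S|=D$ and $c^*-a^*\leq 1$. Both sides of $R$ do have length $\geq\eta$, as you say, but the set you actually integrate over is $R \cap \{0 \leq c-a \leq 1\}$, and this intersection can have area $o(\eta^2)$. Take for instance $a^* = \alpha_2 - \epsilon$, $c^* = \gamma_1 + \epsilon$ with $c^*-a^* = 1$; then $R\cap\{c-a\leq 1\}$ is a right triangle of area $2\epsilon^2$, which is much smaller than $\eta^2$ when $\epsilon$ is small. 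Your observation that $D>8$ forces $b^*>8\eta$ handles only the opposite boundary ($b^*$ near $0$); it does nothing for $b^*$ near $1$, and there is no way to avoid this degeneracy while keeping $|S|=D$, since the supremum may genuinely be attained only on this one cell, and only on a thin sliver of it inside the strip. The paper circumvents the problem by relaxing to $|S|\geq D/4$: it first chooses $(a_0,b_0)$ with $|S(a_0,b_0)|\geq D/2$, then places a box of side $\frac{1}{8(\log 2x)^{10}}$ entirely to the \emph{upper right} of $(a_0,b_0)$ if $b_0<\tfrac12$ and entirely to the \emph{lower left} otherwise, which keeps $b$ in $[0,1]$ automatically; moving the box this way changes the sum by at most two divisors (one for each endpoint, again by $\fa\notin\c{E}(10)$), so $|S|\geq D/2-2\geq D/4$ on the whole box since $D\geq 8$. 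If you want to keep your $(a,c)$-coordinate picture, the fix is to move the candidate point one cell to the right in $a$ (or one cell down in $c$) to escape the bad corner, accept that $|S|$ drops by at most $1$ per cell crossed, and then run the rest of your argument with $|S|\geq D-2$ in place of $|S|=D$; at that point you have essentially reproduced the paper's proof.
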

\begin{proof} 
The lemma is valid if
$\Delta_K(\fa;\psi_K)< 8$, we may therefore assume
henceforth
that the opposite holds. Note that 
the definition of $\Delta_K(\fa;\psi_K)$ provides
$a_0\in \R,b_0 \in [0,1]$
such that 
\beq{eq:bta}{
|\Delta_K(\fa;\psi_K;a_0,b_0)|
\geq
\frac{1}{2}
\Delta_K(\fa;\psi_K)
.}
We bring into play the box
$\mathfrak{B} \subset \R^2$
given by 
\[
\Big(a_0,a_0+\frac{1}{8(\log 2x)^{10}}\Big)
\times
\Big(b_0,b_0+\frac{1}{8(\log 2x)^{10}}\Big),
\Big(a_0-\frac{1}{8(\log 2x)^{10}},a_0\Big)
\times
\Big(b_0-\frac{1}{8(\log 2x)^{10}},b_0\Big)
\]
respectively according to whether $b_0<\frac{1}{2}$ 
or not.
We choose to focus on the latter case; the former being treated similarly. 
For any $(a,b) \in \mathfrak{B} $
we have 
\beq{eq:empty}
{
|\Delta_K(\fa;\psi_K;a,b)
-
\Delta_K(\fa,\psi_K;a_0,b_0)
|
\leq 
\sum_{\substack{
\fd|\fa 
\\
\rm{e}^a\leq \n\fd \leq \rm{e}^{a_0}
}}1
+\sum_{\substack{
\fd|\fa 
\\
\rm{e}^{a+b}\leq \n\fd \leq \rm{e}^{a_0+b_0}
}}1
.}
If the first sum has more than one term then 
there exist
$
\fd\neq \fd' \in \ideals
$
with 
$\fd,\fd'|\fa$ and 
\[
\rm{e}^{a_0-1/8(\log 2x)^{10}}
\le
\n\fd \le \n\fd' \le 
\rm{e}^{u_0}
,\]
thus leading via 
$\n\fa\leq x$ and
$\rm{e}^z\le 1+2z$
(valid in the range $0<z<1$),
to
\[
  \n\fd' \le 
\n\fd
\
\rm{e}^{1/8(\log 2x)^{10}}
\leq 
\n\fd
\l(1+\frac{1}{4(\log 2x)^{10}}\r)
\leq \n\fd
\l(1+\frac{1}{(\log 2\n\fd )^{10}}\r)
,\]
which contradicts the assumption $\fa \notin \c{E}({10})$
of our lemma.
A similar argument shows that the second sum in~\eqref{eq:empty}
also contains at most one term, therefore invoking
$\Delta_K(\fa;\psi_K) \geq 8$
and~\eqref{eq:bta}
provides us with 
\[
\Delta_K(\fa;\psi_K;a,b)
\geq 
\frac{\Delta_K(\fa;\psi_K)}{2}
-2
\geq 
\frac{\Delta_K(\fa;\psi_K)}{4}
.\] 
This inequality immediately furnishes the required estimate
by restricting the range of integration in~\eqref{def:MMqq} to $\mathfrak{B} $.
\end{proof}
For positive integers $r$, $q$
and any $\sigma \in (0,\frac{1}{4}]$
we define the functions
\[
L^*_{r,q}(\sigma):=\frac{4^{\frac{r}{q}}}{\sigma}
+
\sum_{\substack{
\fa\in \cP_K^\circ \\
\fp|\fa \Rightarrow \n\fp >z_2 
}}\frac{M_{2q}(\fa_{r};\psi_K)^{\frac{1}{q}}
\mu_K(\fa)^2}{\n\fa^{1+\sigma}}
\]
and for $s\in \R_{\geq 1}$ 
we
let 
\[f(s):=\sqrt{\frac{s}{1+\log s}}.\]
Noting that 
$\c L(x)\leq \rm{e} L^*_{r_x,q_x}(1/\log x)$,
our aim now becomes
to prove that for all sufficiently small $\sigma>0$ we have 
\beq{eq:aim1}
{r\gg 1,
q=[f(r)]
\Rightarrow 
L^*_{r,q}(\sigma)
\ll
\frac{\rm{e}^{c_2 \sqrt{r \log r}}}{\sigma}
}
for some constant $c_2>0$ depending at most on $K$ and $\psi_K$.
Clearly, this is sufficient for verifying~\eqref{eq:shiuu4}.

The strategy for the proof of~\eqref{eq:aim1} 
is indirect and resembles a backwards induction process.
First, note that if the variable $r$ is replaced by any fixed integer constant $t$,
then for any $q$ we have $L^*_{t',q} \ll_{t'} 1/\sigma$. Indeed, 
using Lemma~\ref{lem:obvi}
in combination with the obvious bound $M_{2q}(\fa;\psi_K)\leq M_{2q}(\fa)$
furnishes
\beq{eq:triv1}
{L^*_{t',q}(\sigma) -\frac{4^{\frac{t'}{q}}}{\sigma}
\ll 
\sum_{\substack{
\fa \in \ideals
}}\frac{\tau_K(\fa_{t'})^2\mu_K(\fa)^2}{\n\fa^{1+\sigma}}
\leq
\sum_{\substack{
\fa \in \ideals
}}\frac{2^{2t'}}{\n\fa^{1+\sigma}}
\ll_{t'}
\zeta_K(1+\sigma)
\ll_t' \frac{1}{\sigma}
.}
It will therefore be advantageous to 
bound $L^*_{r,q}(\sigma)$
in terms of
$L^*_{r-1,q}(\sigma)$
for $r$ and $q$ in suitable ranges.
To this end we shall deploy the succeeding lemma, whose proof is postponed until \S \ref{s:indpro2}.
\begin{proposition}
\label{p:central} 
There exist positive constants $c_3,t',z_2,\sigma_0$ that depend at most on $K$ and
$\psi_K$,
such that for all integers $t,m$ in the range 
\[t'\leq t\leq 10 \log \frac{1}{\sigma},
\
\
m\leq 
\sqrt{t/(1+\log t)}
<m+2
,\]
and $\sigma \in (0,\sigma_0)$
we have 
\[
L^*_{t,m}(\sigma)
\leq
\rm{e}^{\frac{c_3}{m}}
L^*_{t-1,m}(\sigma)
.\]
\end{proposition}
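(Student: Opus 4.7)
The plan is to partition the $\fa$-summation defining $L^*_{t,m}(\sigma)$ according to whether $\omega_K(\fa)<t$ or $\omega_K(\fa)\geq t$. In the former regime $\fa_t=\fa_{t-1}=\fa$, so those terms contribute equally to $L^*_{t,m}(\sigma)$ and $L^*_{t-1,m}(\sigma)$ without any loss. In the latter, each $\fa$ factors uniquely as $\fa=\fb\fp\fc$, where $\fb$ is the product of the first $t-1$ prime divisors of $\fa$ (so $\fb=\fa_{t-1}$ and $\omega_K(\fb)=t-1$), $\fp=\fp^+(\fa_t)$ is the $t$-th prime divisor in the ordering from~\eqref{eq:ordering} (so $\fa_t=\fb\fp$), and $\fc\in\cP_K^\circ$ is square-free and supported on primes of strictly higher index than $\fp$; the inner $\fc$-sum then factors out identically on both sides of the desired inequality. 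The deterministic prefactor $4^{t/m}/\sigma$ meanwhile compares with $4^{(t-1)/m}/\sigma$ by a ratio $4^{1/m}\leq\mathrm{e}^{c_3/m}$ as soon as $c_3\geq\log 4$, so the proposition reduces to a comparison of the inner double sums over $(\fb,\fp)$.

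\textbf{A power-mean reduction in the $\fp$-variable.} For each fixed $\fb$ the target becomes
\begin{equation*}
\sum_{\fp}\frac{M_{2m}(\fb\fp;\psi_K)^{1/m}}{\n\fp^{1+\sigma}}\leq\mathrm{e}^{c_3/m}\,M_{2m}(\fb;\psi_K)^{1/m}\sum_{\fp}\frac{1}{\n\fp^{1+\sigma}},
\end{equation*}
with $\fp$ running over the admissible primes. Setting $w_\fp=1/\n\fp^{1+\sigma}$, the power-mean inequality with exponent $m$ furnishes
\begin{equation*}
\sum_\fp w_\fp M_{2m}(\fb\fp;\psi_K)^{1/m}\leq\Bigg(\sum_\fp w_\fp\Bigg)^{\!(m-1)/m}\Bigg(\sum_\fp w_\fp M_{2m}(\fb\fp;\psi_K)\Bigg)^{\!1/m},
\end{equation*}
so it is enough to establish
\begin{equation*}
\sum_{\fp}\frac{M_{2m}(\fb\fp;\psi_K)}{\n\fp^{1+\sigma}}\leq C\cdot M_{2m}(\fb;\psi_K)\sum_{\fp}\frac{1}{\n\fp^{1+\sigma}}
\end{equation*}
with $C=C(K,\psi_K)$ independent of $\fb,m,\sigma$; the cruder factor $C^{1/m}\leq\mathrm{e}^{(\log C)/m}$ then supplies the desired $\mathrm{e}^{c_3/m}$ saving.

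\textbf{Unfolding the moment and extracting cancellation.} Writing $S(\fa;a,b):=\sum_{\fd\mid\fa,\,\mathrm{e}^a<\n\fd\leq\mathrm{e}^{a+b}}\psi_K(\fd)$, so that $\Delta_K(\fa;\psi_K;a,b)=|S(\fa;a,b)|$, the multiplicativity of $\psi_K$ gives
\begin{equation*}
S(\fb\fp;a,b)=S(\fb;a,b)+\psi_K(\fp)\,S(\fb;a-\log\n\fp,b).
\end{equation*}
Expanding $|S(\fb\fp;a,b)|^{2m}$ as a double binomial sum and integrating in $(a,b)$, the diagonal indices $(j,k)\in\{(0,0),(m,m)\}$ contribute $(1+\psi_K(\fp)^{2m})M_{2m}(\fb;\psi_K)$ by translation invariance of the $a$-integral, summing over $\fp$ to yield the main term $2M_{2m}(\fb;\psi_K)\sum_\fp w_\fp$. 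Cross-terms with $1\leq j+k\leq 2m-1$ carry weights $\psi_K(\fp)^{j+k}$; odd-parity terms then acquire $\psi_K(\fp)/\n\fp^{1+\sigma}$ whose partial sum is $O_{K,\psi_K}(1)$ by non-vanishing of $L(s,\psi_K)$ at $s=1$, while even-parity terms rely on decay, in $\log\n\fp$, of the autocorrelations of $S(\fb;\cdot,b)$ against its $\log\n\fp$-shift.

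\textbf{Main obstacle.} The principal difficulty is controlling the expansion \emph{uniformly in $m$}: a na\"ive term-by-term Cauchy--Schwarz on the even cross-terms costs $\sum_{j,k=0}^m\binom{m}{j}\binom{m}{k}=4^m$, which is fatal. The resolution must be structural rather than term-by-term: one treats the entire signed $2m$-th moment as a unit and extracts the cancellation globally, in the style of La Bret\`eche and Tenenbaum~\cite{MR2927803} for $K=\Q$. Translating their argument to an arbitrary number field requires additional bookkeeping when several prime ideals share a rational prime norm, which is exactly what the fixed ordering $(\fp_i)_{i\geq 1}$ of~\eqref{eq:ordering} handles, singling out a unique leading divisor $\fp^+(\fa_t)$ whose shift $\log\n\fp$ plays the r\^ole of $\fp$ above. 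The hypotheses $t\leq 10\log(1/\sigma)$ and $m\asymp\sqrt{t/(1+\log t)}$ calibrate the moment exponent against the effective length of the induction, so that iterating the factor $\mathrm{e}^{c_3/m}$ for $\asymp t-t'$ steps in~\S\ref{s:indpro} accumulates at most $\mathrm{e}^{c_3\sqrt{t\log t}}$ loss, matching the target~\eqref{eq:aim1}.
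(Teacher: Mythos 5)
Your opening reduction contains two inaccuracies, and the central step is both mis-stated and not carried out.

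First, when you factor $\fa=\fb\fp\fc$ with $\omega_K(\fb)=t-1$, $\fp$ the $t$-th prime factor, and $\fc$ supported on higher-index primes, the $\fc$-sum does \emph{not} factor out identically on both sides: the admissible range for $\fc$ depends on $\fp$ (its prime factors must have index exceeding $i^+(\fp)$). The paper's proof estimates this $\fp$-dependent tail explicitly, obtaining $\sum_{\fn_t=\fm\fp_j}\mu_K^2(\fn)/\n\fn^{1+\sigma}\ll\sigma^{-1}\n\fm^{-1-\sigma}(\n\fp_j\log\n\fp_j)^{-1}$; the extra $(\log\n\fp_j)^{-1}$ is later used critically in the H\"older step on $\c{A}_m(\fm)$.

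Second, and more seriously, the power-mean reduction is the wrong reduction. You reduce to showing $\sum_\fp M_{2m}(\fb\fp;\psi_K)\,\n\fp^{-1-\sigma}\leq C\,M_{2m}(\fb;\psi_K)\sum_\fp\n\fp^{-1-\sigma}$ with $C=C(K,\psi_K)$ uniform in $\fb$ and $m$. Such a pointwise-in-$\fb$ bound fails. After the binomial expansion one has $M_{2m}(\fb\fp;\psi_K)\leq 4M_{2m}(\fb;\psi_K)+W_{2m}(\fb,\fp)$ (cf.~\eqref{def:W}), and the $\fp$-average of $W_{2m}(\fb,\fp)$ is controlled, via Lemma~\ref{lem:needed}, the moment interpolation~\eqref{eq:2bu} and Plancherel (Lemma~\ref{lem:plancherel}), by $m\,4^m\,M_{2m}(\fb;\psi_K)^{(m-2)/(m-1)}\tau_K^*(\fb;\psi_K)^{m/(m-1)}$ up to acceptable errors (Lemma~\ref{lem:24}). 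The quantity $\tau_K^*(\fb;\psi_K)$ is a second-moment object that can be large compared with $M_{2m}(\fb;\psi_K)^{1/m}$ for individual $\fb$; and the $4^m$ is genuine. Your framework therefore cannot deliver a constant $C$ independent of $m$, so the quotient $C^{1/m}$ does not produce the required $\mathrm{e}^{c_3/m}$. What rescues the argument in the paper is taking the $m$-th root \emph{before} summing over $\fp$ (so that the $4^m$ becomes $4$), applying H\"older to the $\fp$-sum with exponents $m,\frac{m}{m-1}$ to isolate $W_{2m}$, and then applying a further H\"older in $\fm$ with exponents $\frac{m-1}{m-2}$, $m-1$ to separate $M_{2m}(\fm;\psi_K)$ from $\tau_K^*(\fm;\psi_K)$. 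Averaging $\tau_K^*$ over $\fm$ (with $\omega_K(\fm)=t-1$) is then the point where the oscillation of $\psi_K$ enters, via a Chebotarev-type estimate (Lemma~\ref{lem:2bmv}) feeding into Lemma~\ref{lem:chicken}, yielding the factor $(1-\frac{1}{2m})^{-(t-1)}$ that ultimately converts into $\mathrm{e}^{O(1/m)}$.

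Third, the handling of cross-terms you sketch is incomplete: you suggest treating the odd-parity terms via non-vanishing of $L(1,\psi_K)$, but the correlations $N_{2j+1,2m}(\fb,\log\n\fp)$ depend on $\fp$ through the shift $\log\n\fp$ as well as through $\psi_K(\fp)$, so a naive appeal to $\sum_\fp\psi_K(\fp)/\n\fp^{1+\sigma}=O(1)$ does not suffice. The paper sidesteps this entirely by the elementary inequality $cd\leq\frac{c^2}{2}+\frac{d^2}{2}$, absorbing all odd $N$-terms into neighbouring even ones. Beyond these specific points, your ``Main obstacle'' paragraph correctly identifies where the real difficulty lies and that it must be handled in the style of~\cite{MR2927803}, but it is a description of the problem rather than a proof: the Plancherel identity, the $\tau_K^*$ apparatus, Lemma~\ref{lem:24}, Lemma~\ref{lem:2bmv}, and Lemma~\ref{lem:chicken} all need to be proved, and none of them is.
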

To deduct~\eqref{eq:shiuu4}
from Proposition~\ref{p:central}
define for each integer
$\ell$
the following set,
\[\c{A}_\ell
:=\{
n \in \N:\ell\leq f(n)<\ell+1
\}
,\]
which furnishes the following
partition into disjoint sets
\[
\N\cap [t',r]
=\bigcup_{\ell\in \N}
\c{A}_\ell
.\]
Let $k:=\min\c{A}_\ell$.
It is easy to see that
$f(k+c\sqrt{k \log k})>f(k)+1$
holds for some large positive $c$
independent of $k$,
and therefore $\#\c{A}_\ell\leq c\sqrt{\ell \log \ell}$.
In addition, the definition of $k$ shows that $f(k-1)<\ell$ and therefore 
$\sqrt{k \log k} \ll \ell \log \ell$,
hence $\#\c{A}_\ell \leq c_4 q \log \ell$
for some absolute constant $c_4>0$. 
Furthermore,
$\c{A}_\ell$ will be empty unless
$\ell\leq q$.

It is now time to reveal our backwards induction process.
Whenever $n \in \c{A}_{q}$ 
we use Proposition~\ref{p:central}
with $t=n,m=q$
to reduce the value of $n$ from $r$ down to $\min \c{A}_{q}$.
This will come at a cost of
$\exp(\frac{c_3}{q}\#\c{A}_{q}) \leq q^{c_3 c_4}$.
At the end of this
section we shall prove that there exists a positive constant
$c_5=c_5(K,\psi_K)$ 
such that for all $\sigma>0$ sufficiently small we have 
\beq{eq:38}
{t'\leq t \leq 10 \log \frac{1}{\sigma},
\
m=[f(t)]
\
\
\Rightarrow
\
\
L^*_{t,m}(\sigma)
\leq 
m^{c_5}
 L^*_{t,m-1}(\sigma)
.}
When $n$ reaches $\min \c{A}_{q}$
we will use~\eqref{eq:38}
with $t=\min \c{A}_{q_x}$ and $m=q$.
We subsequently iterate the process by using Proposition~\ref{p:central} with $m=q-1$ and
$t=n$
for all $n \in \c{A}_{q-1}$.
We repeat this procedure going backwards until $\ell$ 
is small enough so that $t' \in \c{A}_\ell.$ 
The total cost will be 
\[
\ll
\prod_{\ell\leq q}\ell^{c_3 c_4+c_5} 
\leq
\rm{e}^{c_2 \sqrt{r \log r}} 
,\]
for some constant $c_2>0$ that depends at most on $K$ and $\psi_K$.
At the end of this process we shall be left with $L^*_{t',q}(\sigma)$
which can be estimated via~\eqref{eq:triv1},
thus concluding the proof
of~\eqref{eq:aim1}.

\begin{proof} [Proof of~\eqref{eq:38}]
Let us begin by
introducing the constants
$\eta_K:=\min\Big\{10^{-3},
2^{-\frac{3}{[K:\Q]}}\Big\}$
and $u_t:=\exp(70^t/t)$.
We shall make use of the set
$\c{D}_t$ that consists of all  square-free
$\fa \in \ideals$ for which there are 
distinct 
$\fd, \fd'$ satisfying
\[
\fd|\fa_t,
\fd'|\fa_t,
\ 
\
\
\n\fd\leq \n\fd'\leq \n\fd (1+\eta_K^t)
.\]
For each such
$\fa$ 
we can choose and fix square-free and coprime in pairs
$\fd_{\fa_t},
\fd'_{\fa_t},
\fm_{\fa_t} \in \ideals$
with 
$\fa=\fd_{\fa_t} \fd'_{\fa_t}\fm_{\fa_t}$ and 
$\fd_{\fa_t},\fd'_{\fa_t}$ being in the range designated above.
We may now deploy 
the inequality 
$\mu_K(\fa)^2
M_{2m}(\fa_{t};\psi_K)\leq
\mu_K(\fa)^2
\tau_{K}(\fa_t)^{2m}
=4^{tm}$
to
infer that for large enough $t\geq t'$
the contribution of
$\fa$ with $\n\fd_{\fa_t} \leq \exp(70^{t})$  towards 
$L^*_{t,m}(\sigma)-4^{\frac{t}{m}}/\sigma$ is at most
\[
4^t
\hspace{-0,5cm}
\sum_{\substack{ 
\fa \in \c{D}_t
\\
\eta_K^{-t}\leq
\n\fd_{\fa_t} \leq \exp(70^{t})
}}
\hspace{-0,3cm}
\frac{\mu_K(\fa)}{\n\fa^{1+\sigma}}
\leq 4^t 
\hspace{-0,4cm}
\sum_{\substack{ 
\fm \in \ideals
\\
\eta_K^{-t}\leq 
\n \fd  \leq \exp(70^{t})
}}
\hspace{-0,2cm}
\frac{1}{\n\fd^{1+\sigma}\n\fm^{1+\sigma}}
\hspace{-0,3cm}
\sum_{\substack{
\fd' \in \ideals \\
\n\fd\leq \n\fd'\leq \n\fd (1+\eta_K^t)
}}
\hspace{-0,3cm}
\frac{1}{\n\fd'^{1+\sigma}}
.\]
The estimate
$\sum_{\n\fd'\leq x}1=c_Kx+O_K(x^{1-\frac{1}{[K:\Q]}})$
shows that the sum over $\fd'$ is
\[
\leq \n\fd^{-1-\sigma}
\l(c_K \eta_K^t \n\fd+O_K(\n\fd^{1-\frac{1}{[K:\Q]}})
\r)
,\]
which provides the following bound,
\[
\ll
\frac{4^t}{\sigma}
\l(\eta_K^t
\sum_{\n\fd \leq \exp(70^{t})}\frac{1}{\n\fd}
+
\sum_{\n\fd\geq \eta_K^{-t}}\frac{1}{\n\fd^{1+[K:\Q]}}
\r)
\ll
\frac{4^t}{\sigma}
\l(\eta_K^t
{70^{t}}
+
{\eta_K^{t[K:\Q]}}
\r)
\ll
\frac{1}{\sigma 2^t}
.\]

Let us now focus on
the contribution of $\fa \in \c{D}_t$ with $\n\fd_{\fa_t}>
\exp(70^{t})
$.
The cardinality of
the prime ideal divisors of $\fa$ in the range $\n\fp \leq \exp(70^{t}/t)$,
henceforth denoted by $\omega(\fa;t)$,
cannot exceed $t$,
otherwise 
the first $t$ prime ideals dividing $\fa$ will have norm in that range, thus 
$
\n\fd_{\fa_t}
\leq 
\n\fa_t
\leq 
(\exp(70^{t}/t))^t
$,
which is  contradiction. 
In the case where
$\sigma>(32/9)t70^{-t}$
we see,
upon using $\zeta_K(1+\sigma)\ll \sigma^{-1}$,
that
the contribution of the ideals $\fa$
under consideration
towards 
$L^*_{t,m}(\sigma)-4^{\frac{t}{m}}/\sigma$  
is  at most
\[
4^t
\sum_{\substack{\fa \subset \OO_K}}
\frac{\mu_K(\fa)^2\n\fa_t^{\sigma/2}}
{\n\fa^{1+\sigma}{u_t}^{t\sigma/2}}
\ll
4^t
\sum_{\substack{\fa \subset \OO_K}}
\frac{\mu_K(\fa)^2}
{\n\fa^{1+\sigma/2}{u_t}^{t\sigma/2}}
\ll
\frac{(7/10)^t}{\sigma}
.\]
In the remaining case
$\sigma\leq (32/9)t 70^{-t}$
we set $v:=2/(\log 70)$
and bound the contribution by
\[4^t
\sum_{\substack{\fa \subset \OO_K}}
\frac
{\mu_K(\fa)^2v^{\omega(\fa;t)-t}}
{\n\fa^{1+\sigma}}
\ll
\frac{4^t}{v^{t}\sigma}
\prod_{\n\fp\leq \exp\{\sqrt{u_t}\}}(1+\n\fp^{-1})^{v-1}
,\]
which is again
$\ll (7/10)^t/\sigma$.
Thus far we have shown that 
\beq{eq:thusf}
{
L^*_{t,m}(\sigma)
\ll 
\frac{
4^{\frac{t}{m}}+(7/10)^{t}
}{\sigma}
+
\sum_{\substack{
\fa\in \cP_K^\circ, 
\fa \notin \c{D}_t
\\
\fp|\fa \Rightarrow \n\fp >z_2
}}\frac{M_{2m}(\fa_{t};\psi_K)^{\frac{1}{m}}
\mu_K(\fa)^2}{\n\fa^{1+\sigma}}
.}
Taking $z_2>2$
we see that each $\fa$ in the sum 
has odd norm, thus 
each element of the set 
\[S:=\big\{(a,b) \in (-1,0]\times (0,1): a+b \geq 0\big\}\]
satisfies  $\Z\cap(\rm{e}^u,\rm{e}^{a+b}]=\{1\}$ and therefore $\Delta_K(\fa;\psi_K;a,b)=1$.
Hence, for any $q\in \N$ we have 
$M_{2q}(\fa;\psi_K)\geq \rm{vol}(S)=1/2$.
We can now imitate the proof of
Lemma~\ref{lem:e10},
replacing $1/(\log 2x)^{10}$ by $\eta_K^t$, 
to prove that 
for all $q \in \N$
and 
square-free $\fa\in \ideals$ 
with  
$\fa\notin \c D_t$
we
have 
\[
\Delta_K(\fa_t;\psi_K)^2
\leq 8^2
+2^{10}
\eta_K^{-\frac{2t}{q}}
M_{2q}(\fa;\psi_K)^{\frac{1}{q}}
\le 2^{11}
\eta_K^{-\frac{2t}{q}}
M_{2q}(\fa_t;\psi_K)^{\frac{1}{q}}
.\]
Using this for $q=m-1$
in combination with  
\[
M_{2m}(\fa_t;\psi_K) 
\leq
\Delta_K(\fa_t;\psi_K)^2
M_{2m-2}(\fa_t;\psi_K) 
\]
leads to
\[M_{2m}(\fa_t;\psi_K)^{\frac{1}{m}}
\leq 
2^{\frac{11}{m}}
\eta_K^{-\frac{2t}{m(m-1)}}
M_{2m-2}(\fa_t;\psi_K)^{\frac{1}{m-1}}
.\]
The proof of~\eqref{eq:38} is concluded by
injecting
the last inequality into~\eqref{eq:thusf}
and
making use of $m \gg f(t)$ 
to derive 
$\eta_K^{-\frac{2t}{m(m-1)}}\leq m^{c_5}$
for some positive constant $c_5$ that depends at most on $K$ and $\psi_K$.
\end{proof}
\subsection{The proof of Proposition~\ref{p:central}}
\label{s:indpro2}
To relate
$L^*_{t,m}(\sigma)$
and
$L^*_{t-1,m}(\sigma)$
demands that we have an understanding
of the fluctuation of
$M_{2m}(\fa;\psi_K)^{\frac{1}{m}}$ 
as the number of prime ideal
divisors of $\fa$
varies.
To this end, we observe that for any
$\fa \in \ideals$
and prime 
$\fp$
we have
\[
\Delta_K(\fa\fp;\psi_K;a,b)=
\Delta_K(\fa;\psi_K;a,b)
+\psi_K(\fp)
\Delta_K(\fa,\psi_K;a-\log \n\fp,b)
.\]
For a positive integer $m$ we can raise 
to the power $2m$ to obtain 
\[
\Delta_K(\fa\fp;\psi_K;a,b)^{2m}=
\sum_{0\leq j \leq 2m} {{2m}\choose{j}}\psi(\fp)_K^{2m-j}
\Delta_K(\fa;\psi_K;a,b)^{j}
\Delta_K(\fa\fp;\psi_K;a-\log \n\fp,b)^{q-j}
.\]
Hence, letting for $\fa \in \ideals$, $w\in \R$,
$m \in \N$ and $ 0\leq j \leq m$,
\beq{def:Nq}
{N_{j,m}(\fa,w)
:=
\int_{0}^1\int_{\R} \Delta_K(\fa;\psi_K;a,b)^{j} \Delta_W(\fa,\psi;a-w,b)^{q-j} \rm{d}a \rm{d}b
}
and recalling~\eqref{def:MMqq}
we arrive at
\[
M_{2m}(\fa \fp;\psi_K)=
2M_{2m}(\fa;\psi_K)+
\sum_{1\leq j \leq 2m-1} {{2m}\choose{j}}\psi_K(\fp)^{j}
N_{j,2m}(\fa,\log \n\fp)
.\]
If $1< j < m-1$
we use $cd\leq \frac{c^2}{2}+\frac{d^2}{2}$ for 
\[
c=\Delta_K(\fa;\psi_K;a,b)^{j+1}\Delta_K(\fa;\psi_K;a-w,b)^{m-j-1},
d=\Delta_K(\fa;\psi;a,b)^{j}\Delta_K(\fa;\psi_K;a-w,b)^{m-j}
\]
to acquire \[N_{2j+1,2m}(\fa,w)\leq \frac{1}{2}N_{2j+2,2m}(\fa,w)+\frac{1}{2}N_{2j,2q}(\fa,w)\]
and
the inequality $cd\leq \frac{mc^2}{2}+\frac{d^2}{2m}$
yields
in like manner
\begin{align*}
N_{1,2m}(\fa,w)&\leq \frac{m}{2}N_{2,2m}(\fa,w)+\frac{1}{2m}M_{2m}(\fa;\psi_K),\\
N_{2m-1,2m}(\fa,w)&\leq \frac{m}{2}N_{2m-2,2m}(\fa,w) + \frac{1}{2m}M_{2m}(\fa;\psi_K).
\end{align*}
Putting everything together, we have 
\[
M_{2m}(\fa \fp;\psi_K)\leq 4M_{2m}(\fa;\psi_K)+W_{2m}(\fa,\fp)
,\]
where 
\beq{def:W}{W_{2m}(\fa,\fp):=\sum_{1\leq j \leq m-1} b_j {{2m}\choose{2j}} N_{2j,2m}(\fa,\log \n\fp)}
and the sequence given through 
\[b_j:=
\begin{cases} 
1+\frac{m}{2m-1}+\frac{2}{3}(m-2) & \mbox{if } j=1 \\
1+\frac{m}{2m-1} & \mbox{if } j=m-1 \\
 1+\frac{j}{2m-2j-1}+\frac{m-j}{2j+1} &\mbox{otherwise.}  
\end{cases} 
\]
satisfies 
$b_j\leq 1+\frac{2}{3}m$.

Assume that we are given $\fa \in \ideals$ with $\omega_K(\fa)>t-1$.
Then letting $\fp_{t}(\fa)$ be the $t$-th prime ideal factor of $\fa$ according to the ordering~\eqref{eq:ordering}
and using $(y_1+y_2)^{\frac{1}{m}}\leq y_1^{\frac{1}{m}}+y_2^{\frac{1}{m}}$, valid for 
$y_i \in \R_{\geq 0}$,
we
deduce 
\[M_{2m}(\fa_{t};\psi_K)^{\frac{1}{m}}\leq 4^{\frac{1}{m}}M_{2m}(\fa_{t-1};\psi_K)^{\frac{1}{m}}
+W_{2m}(\fa_{t-1},\fp_{t}(\fa))^{\frac{1}{m}}.\] This inequality is also valid 
if $\omega_K(\fa)\le t-1$, since in that case we have $\fa_{t}=\fa_{t-1}$. We obtain 
\[
L^*_{t,m}(\sigma)
\leq4^{\frac{1}{m}}
L^*_{t-1,m}(\sigma) 
+
\sum_{\substack{\fm \in \cP_K^\circ\\ \omega_K(\fm)=t-1 \\\fp|\fm \Rightarrow \n\fp >z_2}}
\sum_{\substack{\fp_j \in ^\circ_K\\ j>i^+(\fm) }}
W_{2m}(\fm,\fp_j)^{\frac{1}{m}}
\sum_{\substack{\fn_{t}=\fm \fp_j}}
\frac{\mu_K^2(\fn) }{\n\fn^{1+\sigma}}
.\] 
Each
ideal $\fn$ 
is
of the form $\fm \fp_j \fd$,
where $\fd$ is square-free and 
each prime divisor of $\fd$,
$\fp_i|\fd$
satisfies
$i>j$.
We can therefore deduce that
the sum over $\fn$ is 
 \[\ll 
\sum_{\substack{\fn_{t}=\fm \fp_j}}\frac{\mu_K^2(\fn) }{\n\fn^{1+\sigma}}
\ll
\n\fm \fp_j^{-1-\sigma}
\prod_{\n \fp>\n\fp_j}\l(1+\frac{1}{\n\fp^{1+\sigma}}\r),\] 
and, 
recalling that we denote
the Dedekind zeta function of $K$ by $\zeta_K$,
we deduce that the last product is 
\[\leq  \zeta_K(1+\sigma) \prod_{\n \fp\leq \n\fp_j}\l(1+\frac{1}{\n\fp^{1+\sigma}}\r)^{-1}
\ll\frac{1}{\sigma}
\prod_{\n \fp\leq \n\fp_j}\l(1-\frac{1}{\n\fp^{1+\sigma}}\r)
.\]
The inequality $\n\fp^{-\sigma}\geq 1-\sigma \log \n\fp$ and Mertens' theorem show that 
the inner product is 
\[\ll
\prod_{\n \fp\leq \n\fp_j}\l(1-\frac{1}{\n\fp}\r)
\exp\l(\sigma\sum_{\n \fp\leq \n\fp_j}\frac{\log \n\fp}{\n\fp}\r)
\ll
\frac{\n\fp_j^\sigma}{\log \n\fp_j}
,\]
thus showing that the sum over $\fn$ is $\ll_K\sigma^{-1} \ \n\fm^{-1-\sigma}(\n\fp_j\log \n\fp_j)^{-1}$. 
Letting
for $\fm \in \cP_K^\circ$,
\[
\c{A}_m(\fm):=
\sum_{\substack{\fp_j \in \cP_K^\circ
\\ j>i^+(\fm)}}\frac{W_{2m}(\fm,\fp_j)^{\frac{1}{m}}}{\n\fp_j \log \n\fp_j}
,\]
we have thus obtained
\beq{eq:mcbr}{
L^*_{t,m}(\sigma)-4^{\frac{1}{m}}
L^*_{t-1,m}(\sigma)
\ll
\frac{1}{\sigma}
\sum_{\substack{\fm \in \cP_K^\circ
\\ \omega_K(\fm)=t-1\\ \fp|\fm\Rightarrow \n\fp>z_2}}
\frac{\mu_K^2(\fm)}{\n\fm^{1+\sigma}}
\
\c{A}_m(\fm)
.}
Using H\"{o}lder's inequality with exponents $m,\frac{m}{m-1}$
we see that $\c{A}_m(\fm)$
is at most
\[
\Bigg(
\sum_{\substack{\fp_j \in \cP_K^\circ
\\ j>i^+(\fm)}}
\frac{W_{2m}(\fm,\fp_j)\log \n\fp_j}{\n\fp_j} 
\Bigg)
^{\frac{1}{m}}
\Bigg(\sum_{\substack{\fp_j \in \cP_K^\circ
\\ j>i^+(\fm)}}
\frac{1}{\n\fp_j \l(\log \n\fp_j\r)^{\frac{m+1}{m-1}}} 
\Bigg)^{\frac{m-1}{m}}
.\]
By the prime number theorem for $K$
and partial summation we infer that
with
$z:=\n\fp^+(\fm)$
the last sum is at most 
\[\leq 
\sum_{\substack{\n\fp >z/3}}
\frac{1}{\n\fp \l(\log \n\fp\r)^{\frac{m+1}{m-1}}}
\ll (\log z)^{-\frac{m+1}{m-1}}
\]
thus acquiring the validity of 
\beq{eq:acq}
{
\c{A}_m(\fm)
\ll
\Bigg(
\sum_{\substack{\fp_j \in \cP_K^\circ\\ j>i^+(\fm)}}
\frac{W_{2m}(\fm,\fp_j)\log \n\fp_j}{\n\fp_j} 
\Bigg)^{\frac{1}{m}}
(\log z)^{-\frac{m+1}{m-1}}
.}
For $\vartheta\in \R$
and
$\fa\in \ideals$ define 
\beq{eq:ramanujan}
{
\tau_K^*(\fa;\psi_K;\vartheta)
:=
\sum_{\substack{\fd\mid \fa}} \psi_K(\mathfrak{d})
\n\fd^{i\vartheta}
\ \ \
\text{and}
\ \ \
\tau_K^*(\fa;\psi_K)
:=\frac{1}{2\pi}
\int_{\R}
\frac{|\tau_K^*(\fa;\psi_K;\vartheta)|^2}{6+\vartheta^2}
\rm{d}\vartheta
.}
\begin{lemma}
\label{lem:plancherel}
For all $\fa\in \ideals$ we have $M_2(\fa;\psi_K)\leq \tau_K^*(\fa;\psi_K)$.
\end{lemma}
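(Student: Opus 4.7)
The plan is to apply Plancherel's identity in the variable $a$, and then use a pointwise kernel comparison on the Fourier side. First, for each fixed $b\in(0,1]$, I would write
\[
\sum_{\substack{\fd\mid\fa\\ \mathrm{e}^a<\n\fd\le \mathrm{e}^{a+b}}}\psi_K(\fd)
= \sum_{\fd\mid\fa}\psi_K(\fd)\,\mathbf{1}_{[\log\n\fd-b,\,\log\n\fd)}(a),
\]
which is a compactly supported $L^2$ function $h_b(a)$ of $a\in\mathbb{R}$. Computing its Fourier transform with the convention $\hat h(\vartheta)=\int_\R h(a)\mathrm{e}^{-i\vartheta a}\,\mathrm{d}a$, and using that $\psi_K$ is real-valued because it is quadratic, gives
\[
\hat h_b(\vartheta)=\frac{\mathrm{e}^{i\vartheta b}-1}{i\vartheta}\,
\sum_{\fd\mid\fa}\psi_K(\fd)\,\n\fd^{-i\vartheta}
=\frac{\mathrm{e}^{i\vartheta b}-1}{i\vartheta}\,\overline{\tau_K^*(\fa;\psi_K;\vartheta)}.
\]

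Parseval's identity then yields
\[
\int_\R \Delta_K(\fa;\psi;a,b)^2\,\mathrm{d}a
= \frac{1}{2\pi}\int_\R \frac{|\mathrm{e}^{i\vartheta b}-1|^2}{\vartheta^2}\,
|\tau_K^*(\fa;\psi_K;\vartheta)|^2\,\mathrm{d}\vartheta.
\]
Integrating over $b\in[0,1]$ and swapping the order of integration by Tonelli's theorem produces
\[
M_2(\fa;\psi_K)
= \frac{1}{2\pi}\int_\R \widetilde K(\vartheta)\,
|\tau_K^*(\fa;\psi_K;\vartheta)|^2\,\mathrm{d}\vartheta,
\qquad
\widetilde K(\vartheta):=\int_0^1 \frac{|\mathrm{e}^{i\vartheta b}-1|^2}{\vartheta^2}\,\mathrm{d}b,
\]
and a direct evaluation of the inner integral gives $\widetilde K(\vartheta)=\frac{2}{\vartheta^2}\bigl(1-\frac{\sin\vartheta}{\vartheta}\bigr)$.

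The main (and only) remaining step is then a pointwise kernel comparison on the Fourier side, namely verifying that $\widetilde K(\vartheta)$ is dominated by the weight $1/(6+\vartheta^2)$ appearing in the definition of $\tau_K^*(\fa;\psi_K)$. I expect this to be the crux of the proof. I would handle it in three regimes: near $\vartheta=0$ the Taylor expansion $1-\sin\vartheta/\vartheta=\vartheta^2/6+O(\vartheta^4)$ exactly calibrates the normalising constant $6$; for large $|\vartheta|$ the crude estimate $|1-\sin\vartheta/\vartheta|\le 2$ gives $\widetilde K(\vartheta)\le 4/\vartheta^2$, which supplies the required decay; the intermediate compact region is covered by a direct check on the smooth bounded function $(6+\vartheta^2)\widetilde K(\vartheta)$. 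Inserting this pointwise kernel bound into the Parseval expression above concludes the proof.
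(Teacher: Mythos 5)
Your overall route is exactly the paper's: represent the short divisor sum as an $L^2$ function of $a$, take its Fourier transform, apply Plancherel in $a$, integrate the resulting kernel over $b\in[0,1]$, and then compare the kernel pointwise with the weight $1/(6+\vartheta^2)$ from the definition of $\tau_K^*(\fa;\psi_K)$. The Parseval identity
\[
M_2(\fa;\psi_K)
=\frac{1}{2\pi}\int_\RR \widetilde K(\vartheta)\,|\tau_K^*(\fa;\psi_K;\vartheta)|^2\,\mathrm{d}\vartheta,
\qquad
\widetilde K(\vartheta)=\frac{2}{\vartheta^2}\Bigl(1-\frac{\sin\vartheta}{\vartheta}\Bigr),
\]
is correct and agrees (after the change of variable $\vartheta=2\pi r$) with the paper's.

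The gap is in the claimed pointwise kernel bound $\widetilde K(\vartheta)\le 1/(6+\vartheta^2)$, and in particular in the "Taylor expansion exactly calibrates the constant $6$" step. It does not: from $1-\sin\vartheta/\vartheta=\vartheta^2/6+O(\vartheta^4)$ you get $\widetilde K(0)=\tfrac{2}{6}=\tfrac13$, whereas $1/(6+0^2)=\tfrac16$, so the bound already fails by a factor of $2$ at $\vartheta=0$. The extra factor of $2$ comes from $|e^{i\vartheta b}-1|^2=2(1-\cos\vartheta b)$, which you write correctly but do not carry into the near-zero calibration. The discrepancy is in fact worse away from $0$ (for example $(6+\vartheta^2)\widetilde K(\vartheta)\approx 3.2$ at $\vartheta=\pi$), so one cannot salvage the pointwise inequality with the stated constant; the correct version, and the one the paper effectively uses via the inequality $1-\frac{\sin x}{x}\le\frac{2x^2}{6+x^2}$, is $\widetilde K(\vartheta)\le 4/(6+\vartheta^2)$. (This only yields $M_2(\fa;\psi_K)\le 4\,\tau_K^*(\fa;\psi_K)$, which suffices for everything downstream since the lemma is only invoked up to implied constants; the exact constant $1$ in the lemma statement is immaterial, but your "three regime" verification, as written, would be attempting to prove something false.) To repair your argument, prove the kernel bound with a constant larger than $1$ on the right side (e.g.\ verify $1-\sin\vartheta/\vartheta\le 2\vartheta^2/(6+\vartheta^2)$) and conclude $M_2(\fa;\psi_K)\ll \tau_K^*(\fa;\psi_K)$.
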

\begin{proof}
We start by using the following well-known
formula, 
valid for all $u,v, x\in \R$,
  \begin{equation}
    -\frac{1}{2\pi i}\int_{-\infty}^{+\infty}\frac{\rm{e}^{it(x-v)}-\rm{e}^{it(x-u)}}{t}\rm{d}t =
    \begin{cases*}
      1 & if $u<x$ and $x<v$, \\
      0 & if $x<u$ or $x>v$,  
    \end{cases*}
  \end{equation}
a proof of which can be found,
for example,
in~\cite[\S 5]{wiener}.
The substitution $t\mapsto 2\pi r$ gives
\[
\Delta_K(\fa;\psi_K;a,b) 
=\int_{-\infty}^{+\infty}
\l(\frac{1-\rm{e}^{-2\pi irb}}{2\pi i r}
\tau_K^*(\fa,\psi_K;2\pi r)\r)
\rm{e}^{-2\pi i ar}
\rm{d}r
\]
except when $a,b$ assume a finite set of values,
thus
Plancherel's theorem leads to
\[
\int_{-\infty}^{+\infty}
\Delta_K(\fa;\psi_K;a,b)^2\rm{d}a
=
\frac{1}{2\pi^2}
\int_{-\infty}^{+\infty} 
\frac{1-\cos(2\pi  rb)}{r^2}
\big|\tau_K^*(\fa,\psi_K;2\pi r)\big|^2
\rm{d}r
.\]
It can then be inferred 
from
$\int_{0}^1
(1-\cos(2\pi  rb))
\rm{d}b=1-\frac{\sin(2\pi r)}{2\pi r}$
that 
$2\pi^2M_2(\fa;\psi_K)$ equals
\[
\int_{-\infty}^{+\infty} 
\l(
1-\frac{\sin(2\pi r)}{2\pi r}
\r)
\frac{\big|\tau_K^*(\fa,\psi_K;2\pi r)
\big|^2}{r^2}
\rm{d}r
\]
and
the inequality 
$
1-\frac{\sin(2\pi r)}{2\pi r}
\leq \frac{4\pi^2 r^2}{3+2\pi^2r^2}$
furnishes the proof of our lemma.
\end{proof}
Define the arithmetic function $g:\N\to \Z$ though
$g(n):=\#\{\fp \subset \OO_K:\n\fp=n\}$
and note that the prime number theorem for $K$
provides a
positive
constant $\varkappa$ such that 
\beq{eq:pnt}{
\sum_{1\leq n \leq T}g(n)=\rm{li}(T)
+O\big(T\rm{e}^{-(\log T)^\varkappa}\big)
.}
Recall the definition of $M_q(\fa)$ in~\eqref{def:term}.
\begin{lemma}
\label{lem:24}
For all $\Xi
\geq 1$ and $m \in \N$
we have 
\begin{align*}
m^{-1}4^{-m}
\sum_{\substack{\n\fp>\Xi}}
\frac{W_{2m}(\fm,\fp)\log \n\fp}{\n\fp} 
&\ll
M_{2m}(\fm,\psi_K)^{\frac{m-2}{m-1}}
\tau_K^*(\fm;\psi_K)^{\frac{m}{m-1}} \\
&+
\rm{e}^{-(\log \Xi)^\varkappa}
4^m
M_{2m}(\fm)^{\frac{2m-2}{2m-1}}
\tau_K(\fm)^{\frac{2m}{2m-1}}
.\end{align*}
\end{lemma}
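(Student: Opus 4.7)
The plan is to establish the bound in three stages: first reduce $W_{2m}(\fm,\fp)$ to cross-correlation integrals; second, separate a factor of $M_{2m}(\fm;\psi_K)^{(m-2)/(m-1)}$ via H\"older's inequality; third, evaluate the residual prime sum by combining Plancherel (in the spirit of Lemma~\ref{lem:plancherel}) with the prime number theorem~\eqref{eq:pnt} to produce both the $\tau_K^*(\fm;\psi_K)^{m/(m-1)}$ main term and the error term.

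Since $b_j\leq 1+\tfrac{2m}{3}\ll m$ and $\binom{2m}{2j}\leq 4^m$, one has
\[
W_{2m}(\fm,\fp)\ll m\cdot 4^m\sum_{j=1}^{m-1}N_{2j,2m}(\fm,\log\n\fp),
\]
so it suffices to bound each $N_{2j,2m}(\fm,\log\n\fp)$ individually. For the second stage I write
\[
\Delta(a,b)^{2j}\Delta(a-w,b)^{2m-2j}=\l[\Delta(a,b)\Delta(a-w,b)\r]^{\frac{m}{m-1}}\cdot\Delta(a,b)^{2j-\frac{m}{m-1}}\Delta(a-w,b)^{2m-2j-\frac{m}{m-1}}
\]
and apply the three-factor H\"older inequality with conjugate exponents $\bigl(m-1,\tfrac{2m}{2j-m/(m-1)},\tfrac{2m}{2m-2j-m/(m-1)}\bigr)$, whose reciprocals sum to $1$. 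Translation invariance of Lebesgue measure in $a$ reduces each of the latter two integrals to $M_{2m}(\fm;\psi_K)$, yielding the pointwise estimate
\[
N_{2j,2m}(\fm,w)\leq N_{m,2m}(\fm,w)^{\frac{1}{m-1}}M_{2m}(\fm;\psi_K)^{\frac{m-2}{m-1}},
\]
valid for $m\geq 2$ throughout $1\leq j\leq m-1$ since the exponents $2j-\tfrac{m}{m-1}$ and $2m-2j-\tfrac{m}{m-1}$ are then non-negative.

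For the third stage I factor out $M_{2m}(\fm;\psi_K)^{(m-2)/(m-1)}$ and use Jensen's inequality $\bigl(\sum c_\fp y_\fp\bigr)^{m-1}\leq\bigl(\sum c_\fp\bigr)^{m-2}\sum c_\fp y_\fp^{m-1}$ with $c_\fp=\log\n\fp/\n\fp$ and $y_\fp=N_{m,2m}(\fm,\log\n\fp)^{1/(m-1)}$ to reduce matters to estimating the sum $\sum_{\n\fp>\Xi}\tfrac{\log\n\fp}{\n\fp}N_{m,2m}(\fm,\log\n\fp)$. Setting $F(a,b):=\Delta(\fm;\psi_K;a,b)^m$, I recognise this as
\[
\int_0^1\int_{\R}F(a,b)\Bigl(\sum_{\n\fp>\Xi}\frac{\log\n\fp}{\n\fp}F(a-\log\n\fp,b)\Bigr)\,\mathrm{d}a\,\mathrm{d}b
\]
and apply Parseval in the $a$-variable (mimicking the Mellin step in the proof of Lemma~\ref{lem:plancherel}) to rewrite it as an integral of $|\widehat F(\vartheta,b)|^2$ against $\sum_{\n\fp>\Xi}(\log\n\fp)\n\fp^{i\vartheta-1}$. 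Partial summation on the prime sum, using the explicit form~\eqref{eq:pnt} of the prime number theorem for $K$, isolates a main term which, after the earlier H\"older balancing, recombines via the definition~\eqref{eq:ramanujan} into $\tau_K^*(\fm;\psi_K)^{m/(m-1)}$; the remaining PNT error of size $\mathrm{e}^{-(\log\Xi)^\varkappa}$ produces the second summand, where I pass from $\psi_K$-weighted quantities to character-free analogues via $\Delta(\fm;\psi_K;a,b)\leq\Delta_K(\fm;a,b)$ and Lemma~\ref{lem:obvi} to recover $M_{2m}(\fm)^{(2m-2)/(2m-1)}\tau_K(\fm)^{2m/(2m-1)}$ through the same H\"older balance.

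The principal difficulty is the Plancherel analysis for $F=\Delta^m$ when $m>1$, since Lemma~\ref{lem:plancherel} only directly covers $m=1$. I would resolve this either by re-running the Mellin derivation for the $m$-fold product $\Delta^m$ (writing its Fourier transform as an iterated convolution in $\vartheta$), or by interpolating via log-convexity of $j\mapsto N_{2j,2m}(\fm,w)$ to reduce to the base case where Lemma~\ref{lem:plancherel} applies. A secondary bookkeeping point is to check that the extraneous factor $\bigl(\sum_{\Xi<\n\fp\leq\mathrm{e}\n\fm}\log\n\fp/\n\fp\bigr)^{(m-2)/(m-1)}$ generated by the Jensen step is absorbed into $\tau_K^*(\fm;\psi_K)^{m/(m-1)}$, so that the final bound has no residual dependence on $\n\fm$.
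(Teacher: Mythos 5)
Your reduction to the quantities $N_{2j,2m}(\fm,\log\n\fp)$ is in the right spirit, and the three-factor H\"{o}lder interpolation
$N_{2j,2m}(\fm,w)\leq N_{m,2m}(\fm,w)^{\frac{1}{m-1}}M_{2m}(\fm;\psi_K)^{\frac{m-2}{m-1}}$
is algebraically correct for $m\geq 2$. The difficulty is that you have inverted the order of operations relative to the paper, applying H\"{o}lder in the $(a,b)$-variables first and only then confronting the prime sum; both obstacles you flag as ``to be resolved'' are in fact genuine and cannot be removed within this ordering. Concretely, the Jensen step over $\fp$ produces the factor
$\bigl(\sum_{\Xi<\n\fp\leq \mathrm{e}\n\fm}\log\n\fp/\n\fp\bigr)^{\frac{m-2}{m-1}}\asymp(\log\n\fm)^{\frac{m-2}{m-1}}$,
and this cannot be absorbed into $\tau_K^*(\fm;\psi_K)^{\frac{m}{m-1}}$: already for $\fm$ a single prime ideal one has $\tau_K^*(\fm;\psi_K)=O(1)$ while $\log\n\fm$ is unbounded, so no inequality $\log\n\fm\ll\tau_K^*(\fm;\psi_K)$ is available. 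In addition, a Plancherel analysis for $\Delta(\fm;\psi_K;\cdot,b)^m$ with $m>1$ produces an $m$-fold convolution in the Mellin variable, and there is no direct passage from that object to $\tau_K^*(\fm;\psi_K)$, which is built from $|\tau_K^*(\fm;\psi_K;\vartheta)|^2$; the ``log-convexity'' alternative you mention does not reduce this to the $m=1$ base case either.

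The paper avoids both issues by performing the prime-sum decoupling \emph{before} any H\"{o}lder. For each fixed $j$, write $h=2m-2j$ and expand $\Delta_K(\fm;\psi_K;a-\log\n\fp,b)^{h}$ as a sum over $h$-tuples $(\fd_1,\dots,\fd_h)$ of divisors of $\fm$. The sum over $\fp$ then becomes a Mertens-type sum $\Osum_{n>\Xi} g(n)\log n/n$ over a short interval determined by $\max\n\fd_i$, $\min\n\fd_i$ and $(a,b)$. The prime number theorem~\eqref{eq:pnt} with Abel summation evaluates this short sum as its length plus an error $\mathrm{e}^{-(\log\Xi)^\varkappa}$; integrating the main term back over $a$ yields the decoupled quantity $\int_{\R}\Delta_K(\fm;\psi_K;s,b)^{h}\,\mathrm{d}s$, while the error multiplies a count of clustered $h$-tuples that Lemma~\ref{lem:needed} bounds by $2^h M_h(\fm)$. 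Only once the cross-correlation has been replaced by a \emph{product} of single-variable integrals are~\eqref{eq:2bu} and Lemma~\ref{lem:plancherel} invoked, via H\"{o}lder, to reach $M_{2m}(\fm;\psi_K)^{\frac{m-2}{m-1}}\tau_K^*(\fm;\psi_K)^{\frac{m}{m-1}}$ for the main term and the stated $M_{2m}(\fm)^{\frac{2m-2}{2m-1}}\tau_K(\fm)^{\frac{2m}{2m-1}}$ for the error. In that ordering one never meets a divergent Mertens factor and only uses Plancherel for a single power of $\Delta$. You should restructure the argument accordingly: divisor-tuple expansion and PNT first, H\"{o}lder on the decoupled product second.
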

\begin{proof}
Using \eqref{def:W}
shows that the sum in our lemma is bounded by
\beq{eq:ourlem}{
\l(1+\frac{2}{3}m\r)
\sum_{1\leq j \leq m-1}
{{2m}\choose{2j}}
\sum_{\substack{\n\fp>\Xi}}
\frac{ N_{2j,2m}(\fm,\log \n\fp)
\log \n\fp}{\n\fp}
}
and 
the inner sum can be recast as
\[
\int_{0}^1\int_{\R} 
\Delta_K(\fm;\psi_K;a,b)^{2j} 
\bigg(
\sum_{\substack{\n\fp>\Xi}}
\frac{ \log \n\fp}{\n\fp}\Delta_K(\fm;\psi_K;a-\log \n\fp,b)^{2m-2j} 
\bigg)
\rm{d} a \rm{d}b
.\]
Letting $h:=2m-2j$ 
allows us to see that
the sum over $\fp$ equals
\[
\sum_{\substack{\fd_1,\ldots,\fd_h \in \ideals
\\ \fd_i|\fm}} \psi_K(\fd_1\cdots \fd_h)
\Osum_{n>\Xi}
g(n)
\frac{ \log n}{n}
,\]
where the sum $\Osum$ is
over integers 
$n$ satisfying the further condition
\[
a-\min \log \n\fd_i < \log n \leq a-\max \log \n\fd_i+b
.\]
This implies that
the sum contains no terms
unless  
$\max \n\fd_i < \rm{e}^b \min \n\fd_i $, in which case~\eqref{eq:pnt} and
Abel's summation provide the bound
\[
\ll
\int_{a-\min\log \n\fd_i}^{a+b-\log \max\n\fd_i}
\b{1}_{(0,\infty)}(t-\log \Xi) \rm{d}t+
\rm{e}^{-(\log \Xi)^\varkappa}
,\]
where
$\b{1}_{(0,\infty)}$ denotes the characteristic function of the positive real numbers. 
This confirms
\[
\sum_{\substack{\n\fp>\Xi}}
\frac{ \log \n\fp}{\n\fp}\Delta_K(\fm;\psi_K;a-\log \n\fp,b)^h
\ll
\int_{\R} \Delta_K(\fm;\psi_K;s,b)^h \rm{d}s
+
\rm{e}^{-(\log \Xi)^\varkappa}
\hspace{-0,8cm}
\sum_{\substack{\fd_1,\ldots,\fd_h\in \ideals,\fd_i|\fm
\\
\max \n\fd_i < \rm{e} \min \n\fd_i
}}
\hspace{-0,5cm}
1
\]
and according to Lemma~\ref{lem:needed}
the inner sum is 
$\ll 2^{h}M_h(\fm)$.
Therefore the sum over the prime ideals in~\eqref{eq:ourlem}
is  
\begin{align*}
&\ll
\int_{0}^1
\int_{\R} 
\Delta_K(\fm;\psi_K;a,b)^{2j} \rm{d}a
\int_{\R} \Delta_K(\fm;\psi_K;s,b)^{2m-2j} \rm{d}s
\rm{d}b
\\
&+
\rm{e}^{-(\log \Xi)^\varkappa}
4^{m-j}
M_{2j}(\fm)
M_{2m-2j}(\fm)
,\end{align*}
where we have used $|\psi_K(\fd)|\leq 1$
to dispense with the integration over $0\leq b \leq 1$ in the second term.
In virtue of~\eqref{eq:2bu}
and
Lemma~\eqref{lem:plancherel} one can show by following the argument involving H\"{o}lder's inequality
at the end of the proof of~\cite[Lem. 2.4]{MR2927803}
that the last expression is  
\[
\ll
M_{2m}(\fm;\psi_K)^{\frac{m-2}{m-1}}\tau^*(\fm;\psi_K)^{\frac{m}{m-1}}
+
\rm{e}^{-(\log \Xi)^\varkappa}
4^m
M_{2m}(\fm)^{\frac{2m-2}{2m-1}}
\tau_K(\fm)^{\frac{2m}{2m-1}}
,\] which, in view of $
\sum_{1\leq j \leq m-1}
{{2m}\choose{2j}}= 4^m-2$, 
finishes our proof.
\end{proof} 
We may now deploy the bound supplied by Lemma~\ref{lem:24} 
in conjunction with~\eqref{eq:acq} to obtain
\beq{eq:right}
{
\c{A}_m(\fm)
\ll
\c{B}_m(\fm)
+\c{C}_m(\fm)
,}
where
\[
\c{B}_m(\fm):= 
\frac{
M_{2m}(\fm;\psi_K)^{\frac{m-2}{m(m-1)}}
\tau_K^*(\fm;\psi_K)^{\frac{1}{m-1}}}{(\log z)^{\frac{m+1}{m-1}}}
\]
and
\[
\c{C}_m(\fm):=
\frac{
M_{2m}(\fm)
^{\frac{2m-2}{m(2m-1)}}
\tau_K(\fm)^{\frac{2}{2m-1}}}
{\rm{e}^{\frac{1}{m}(\log z)^\varkappa}}
.\]
Alluding to Lemma~\ref{lem:obvi}, the 
term $\c C_m(\fm)$
makes the following
contribution towards~\eqref{eq:mcbr},
\[
\ll
\frac{1}{\sigma}
\sum_{\substack{\fm \in \cP_K^\circ\\ 
\omega_K(\fm)=t-1
\\
\fp|\fm\Rightarrow \n\fp>z_2}}
\frac{\mu_K^2(\fm)\tau_K(\fm)^{2}}{\n\fm^{1+\sigma}}
\exp\l(-\frac{1}{m}(\log \n\fp^+(\fm))^\varkappa
\r)
.\]
Each $\fm$ above is the product of
$\fp^+(\fm)$
and $t-2$ prime ideals 
$\fp_i$ 
with
$\n\fp_i\leq \n\fp^+(\fm)$.
Taking into account the possible permutations of the ideals $\fp_i$
shows that
the sum over $\fm$ is 
\[
\ll
\frac{1}{(t-2)!}\sum_{\n\fp>z_2}
\frac{\exp\l(-\frac{1}{m}(\log \n\fp)^\varkappa\r)
}{\n\fp}
\l(\sum_{\n\fp_i\leq \n\fp}
\frac{4}{\n\fp_i}
\r)^{t-2}
.\] 
The sum over $\fp_i$ is at most $4\log_2\n\fp+O(1)$,
hence
using the inequality $\exp(-x)\leq \frac{\ell!}{x^\ell} $, valid for all $x\geq 0,\ell \in \N$,
we obtain that the expression above is bounded by 
\[
\ll
\frac{4^t\ell! m^\ell}{(t-1)!}
\sum_{\fp>z_2}\frac{1}{\n\fp}\frac{(\log \log \n\fp)^{t-2}}{(\log\n\fp)^{\varkappa \ell}}
.\]
We may suppose that $t'$ 
satisfies
$(\frac{\varkappa t'}{5}-1)>1$
and $t'>5$, so that upon
choosing $\ell:=[\frac{t}{5}]$ 
we see that the sum is 
\[
\ll
\int_{z_2}^\infty\frac{(\log \log u)^t}{u(\log u)^{t/5}}\rm{d}u
=
\Big(\frac{\varkappa t}{5}-1\Big)^{-t}
\int_{
(\frac{\varkappa t}{5}-1)
\log \log z_2}^{\infty}
\frac{v^t}{\rm{e}^v}
\rm{d}v
\leq 
\Big(\frac{\varkappa t}{5}-1\Big)^{-t}
t!
.\]
Therefore,
using $\log m=\frac{1}{2}
\log t+O(\log \log t)$ 
(which is implied by the assumptions of Proposition~\ref{p:central}),
as well as $\log n!=n \log n+O(n)$,
we see that
the contribution
of the entity
$\c{C}_m(\fm)$
towards~\eqref{eq:mcbr}
is
\[
\ll
\sigma^{-1}
\frac{4^t\ell!m^\ell 10^{t}}{\lambda_0^\ell t^{t}} 
\leq 
\sigma^{-1}
\exp(-\frac{7}{10}t\log t+O(t \log \log t))
\ll \frac{1}{\sigma
(t!)^{2/3}}
.\]

We now turn our attention to the contribution of 
$\c{B}_m(\fm)$
to~\eqref{eq:mcbr}.
It is at most
\beq{eq:wis}{
\sum_{\substack{\fm \in \cP_K^\circ
\\ \omega_K(\fm)=t-1\\ \fp|\fm\Rightarrow \n\fp>z_2}}
\frac{M_{2m}(\fm;\psi_K)^{\frac{m-2}{m(m-1)}}\tau_K^*(\fm;\psi_K)^{\frac{1}{m-1}}}{(\log \n\fp^+(\fm))^{\frac{1}{m}}}
\frac{\mu_K^2(\fm)}{\n\fm^{1+\sigma}} 
\frac{1}{\sigma \log \n\fp^+(\fm)}  
.}
For $\fm$ as in the sum above we let $S(\fm)$
be the set of square-free elements $\fn \in \cP_K^\circ$
that are
divisible by $\fm$ 
with the further property that 
any prime ideal  $\fp_i|\fn$ with $\fp_i \nmid \fm$ satisfies 
$i>i^+(\fm)$.
These ideals enjoy the property
$\fn_{t-1}=\fm$ and therefore 
\[\sum_{\substack{\fn \in \cP_K^\circ \\\fn_{t-1}=\fm}}\frac{\mu_K(\fn)^2}{\n\fn^{1+\sigma}}\geq\sum_{\substack{\fn \in S(\fm)\\}}\frac{\mu_K(\fn)^2}{\n\fn^{1+\sigma}}\geq \frac{\mu_K(\fm)^2}{\n\fm^{1+\sigma}}
\prod_{\substack{i>i^+(\fm)\\ \fp_i \in \cP_K^\circ}}
\l(1+\frac{1}{\n\fp_i^{1+\sigma}}\r).\]
The effect of primes $\fp_i$ with residue degree more than $1$ is bounded by a constant depending only on $K$,
thus the product is 
\[
\gg 
\zeta_K(1+\sigma)
\prod_{i \le i^+(\fm)}\l(1-\frac{1}{\n\fp_i^{1+\sigma}}\r)
\gg
\frac{1}{\sigma}
\exp\l(-\sum_{i \le i^+(\fm)}\frac{1}{\n\fp^{1+\sigma}}\r)
,\]
which by the Mertens theorem for $K$ is 
\[
\gg \frac{1}{\sigma}\exp\l(-\sum_{i \le i^+(\fm)}\frac{1}{\n\fp}\r)
\gg \frac{1}{\sigma \log \n\fp^+(\fm)}
.\]
We deduce that the sum in~\eqref{eq:wis} is 
\[
\ll
\sum_{\substack{\fm \in \cP_K^\circ
\\ 
\omega_K(\fm)=t-1
\\
\fp|\fm\Rightarrow \n\fp>z_2
}}
\frac{M_{2m}(\fm;\psi_K)^{\frac{m-2}{m(m-1)}}
\tau_K^*(\fm;\psi_K)^{\frac{1}{m-1}}}{(\log \n\fp^+(\fm))^{\frac{1}{m}}}
\sum_{\substack{\fn \in \cP_K^\circ
\\
\fn_{t-1}=\fm}}
\frac{\mu_K(\fn)^2}{\n\fn^{1+\sigma}}
.\]
Observe that for each $\fn$ in
the inner sum
we have 
$\omega_K(\fn)\geq \omega_K(\fm)=t-1$
and therefore the double sum may be
reshaped into
\[
\sum_{\substack{\fn\in \cP_K^\circ\\ \omega_K(\fn)\geq t-1}}
\l(
\frac{\mu_K(\fn)^2M_{2m}(\fn_{t-1};\psi_K)^{\frac{m-2}{m(m-1)}}
}{\n\fn^{\frac{(1+\sigma)(m-2)}{m-1}}}
\r)
\l(
\frac{\mu_K(\fn)^2\tau_K^*(\fn_{t-1};\psi_K)^{\frac{1}{m-1}}}
{\n\fn^{\frac{1+\sigma}{m-1}}(\log \n\fp^+(\fn_k))^{\frac{1}{m}}}
\r)
.\]
Assuming that $t'$ is large enough so that $m>2$ we can 
gain the succeeding bound via a use of 
H\"{o}lder's inequality with exponents 
$\frac{m-1}{m-2}$,
$m-1$,
\[
\ll
L^*_{t-1,m}(\sigma)^{\frac{m-2}{m-1}}
\c{D}_{t-1,m}(\sigma)^{\frac{1}{m-1}}
,\] 
where for $t,m$ positive integers and $\widehat\sigma \in (0,\frac{1}{4})$ we have defined 
\[
\c{D}_{t-1,m}(\widehat{\sigma})
:=
\sum_{\substack{\fn\in \cP_K^\circ\\ \omega_K(\fn)\geq t-1}} 
\frac{\mu_K(\fn)^2
\tau_K^*(\fn_k;\psi)}{\n\fn^{1+\widehat\sigma}(\log \n\fp^+(\fn_{t-1}))^{\frac{m-1}{m}}}
.\]
To estimate 
$\c{D}_{t-1,m}(\widehat{\sigma})$
we shall need the following lemma.
\begin{lemma}
\label{lem:2bmv} 
For $\vartheta,\Gamma \in (0,\infty)$ define  
\[S(\Gamma;\vartheta):=
\sum_{\substack{\n\fp\leq \Gamma
\\ \fp \in \cP_K^\circ
}}\frac{|1+\psi_K(\fp)\n\fp^{i\vartheta}|^2}{\n\fp}
.\]
There exists a constant $B=B(K,\psi_K)$ such that
the following holds uniformly in $\vartheta$,
\[
S(\Gamma;\vartheta)\leq 
\begin{cases} 
2\log\l(1+|\vartheta|\log \Gamma\r)+2\log\l(\frac{\log \Gamma}{1+|\vartheta|\log \Gamma}\r) +O(1), & \mbox{if } 0<|\vartheta|\leq 1 \\
2\log \log \Gamma+B\log \log (2+|\vartheta|), &\mbox{otherwise,}  
\end{cases} 
\]
where the implied constants are independent of $\Gamma$ and $\vartheta$.
\end{lemma}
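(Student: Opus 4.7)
The strategy is to expand the square and split into a Mertens-type sum plus an oscillating character sum. Since $\psi_K$ is real-valued with $\psi_K(\fp)^2 = 1$ for prime ideals outside the finite set dividing the conductor, one has
\[|1+\psi_K(\fp)\n\fp^{i\vartheta}|^2 = 2 + 2\psi_K(\fp)\cos(\vartheta\log\n\fp)\]
up to an $O(1)$ error contributed by the excluded primes. Hence $S(\Gamma;\vartheta) = 2\mathcal{M}(\Gamma) + 2\mathcal{B}(\Gamma;\vartheta) + O(1)$, where
\[\mathcal{M}(\Gamma) := \sum_{\substack{\fp\in \cP_K^\circ \\ \n\fp\leq\Gamma}} \frac{1}{\n\fp}, \qquad \mathcal{B}(\Gamma;\vartheta) := \sum_{\substack{\fp\in \cP_K^\circ \\ \n\fp\leq\Gamma}} \frac{\psi_K(\fp)\cos(\vartheta\log\n\fp)}{\n\fp}.\]
Because the prime ideals of residue degree at least $2$ contribute an absolutely convergent tail to $\mathcal{M}$, Mertens' theorem for $K$ yields $\mathcal{M}(\Gamma) = \log\log\Gamma + O(1)$.

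To see how the two explicit summands in case~1 arise, I would split the ranges of $\mathcal{M}$ and $\mathcal{B}$ at the cut-off $X := \min(\Gamma, \rm{e}^{1/|\vartheta|})$, which separates the non-oscillatory regime $|\vartheta\log\n\fp|\leq 1$ from the oscillatory one. Mertens then yields
\[2\!\!\sum_{\substack{\fp\in \cP_K^\circ \\ \n\fp\leq X}}\!\!\frac{1}{\n\fp} = 2\log\!\bigg(\frac{\log\Gamma}{1+|\vartheta|\log\Gamma}\bigg) + O(1), \quad 2\!\!\!\!\sum_{\substack{\fp\in \cP_K^\circ \\ X<\n\fp\leq\Gamma}}\!\!\frac{1}{\n\fp} = 2\log(1+|\vartheta|\log\Gamma) + O(1),\]
reproducing the two explicit terms in case~1. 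It thus remains to verify that the twisted part $\mathcal{B}(\Gamma;\vartheta)$ is absorbed into the $O(1)$ error in case~1 and into $B\log\log(2+|\vartheta|)$ in case~2.

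Bounding $\mathcal{B}$ is the substantial part. On the low range $\n\fp\leq X$, the Taylor estimate $|\cos(\vartheta\log\n\fp)-1|\ll(\vartheta\log\n\fp)^2$ reduces the contribution to the untwisted sum $\sum_\fp \psi_K(\fp)/\n\fp$, which is $O(1)$ thanks to the holomorphy and non-vanishing of $L(s,\psi_K)$ at $s=1$, plus an error $O(\vartheta^2(\log X)^2) = O(1)$ by the choice of $X$. For the high range $X<\n\fp\leq\Gamma$, cancellation from $\n\fp^{i\vartheta}$ is extracted by partial summation against a quantitative prime ideal theorem for the twisted Hecke L-function $L(s+i\vartheta,\psi_K)$: in case~1 one obtains a uniform $O(1)$ bound since $L(1+i\vartheta,\psi_K)$ is bounded on $|\vartheta|\leq 1$, while in case~2 the standard convexity-type bound $|L(1+i\vartheta,\psi_K)| \ll \log(2+|\vartheta|)$ on the critical line replaces this boundedness and produces the supplementary $B\log\log(2+|\vartheta|)$ summand via a relation of the shape $2\mathcal{B} \leq 2\log|L(1+i\vartheta,\psi_K)| + O(1)$. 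The main obstacle is ensuring that this Mertens-type passage from the Dirichlet series to its partial sum carries an error term uniform in the imaginary height $\vartheta$; this rests on a Landau-type zero-free region for $L(s,\psi_K)$ that is uniform in that direction, which is available unconditionally.
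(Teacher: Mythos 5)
Your route is genuinely different from the paper's. The paper never expands the square and never touches a shifted $L$-function. Instead it splits $S(\Gamma;\vartheta)=S_1(\Gamma;\vartheta)+S_{-1}(\Gamma;\vartheta)+O(1)$ according to the sign $\psi_K(\fp)=j\in\{\pm1\}$, invokes a quantitative Chebotarev theorem to show that the degree-one primes with $\psi_K(\fp)=j$ have density $\tfrac12$ (so that $g_j(p):=\#\{\fp\in\cP_K^\circ:\n\fp=p,\psi_K(\fp)=j\}$ satisfies $\sum_{p\le\Gamma}g_j(p)=\tfrac12\rm{li}(\Gamma)+O(\Gamma\rm{e}^{-c'(\log\Gamma)^{\eta'}})$), and then applies the twisted Mertens-type lemma~\cite[Lem.\,III.4.13]{MR3363366} with $h(r)=|1+\rm{e}^{ir}|^2$, whose mean is $\overline h=2$, to estimate $\sum_{w<p\le\Gamma}|1+jp^{i\vartheta}|^2g_j(p)/p$ over a movable lower cut-off $w$; the two regimes of the lemma then come out by choosing $w$ in terms of $\vartheta$, exactly as in~\cite[Lem.\,2.5]{MR2927803}. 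The point is that all oscillation in $\vartheta\log p$ is absorbed by the mean value $\overline h$, and the only $L$-function/Chebotarev input concerns the \emph{fixed} character $\psi_K$, not the shifted one. Your proposal instead controls the oscillating remainder $\mathcal B(\Gamma;\vartheta)=\sum\psi_K(\fp)\cos(\vartheta\log\n\fp)/\n\fp$ via $\log|L(1\pm i\vartheta,\psi_K)|$ together with a PNT for the shifted Hecke character $\psi_K\cdot\n(\cdot)^{i\vartheta}$. This is workable and arguably more transparent, since it names the analytic object responsible for the $B\log\log(2+|\vartheta|)$ loss in case~2; what it costs you is that you must now track the analytic conductor of the shifted character in the direction of $\vartheta$, precisely the dependence the paper's approach sidesteps.

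Two steps in your sketch need to be firmed up. First, for case~1 your high-range argument asserts $\mathcal B=O(1)$ ``since $L(1+i\vartheta,\psi_K)$ is bounded''; what is actually required is that $\log L(1+i\vartheta,\psi_K)$ be bounded, hence non-vanishing of $L$ on the whole segment $\{1+i\vartheta:|\vartheta|\le 1\}$ (you invoke non-vanishing only at $s=1$, for the low range). Second, the identity relating $\mathcal B$ to $\log|L|$ is for the infinite Euler product; passing to the truncated sum $\mathcal B(\Gamma;\vartheta)$ with an error that is uniform in $\vartheta$ is exactly where the conductor dependence bites. When $\Gamma$ is not appreciably larger than a fixed power of $2+|\vartheta|$, the shifted PNT delivers nothing and one should instead fall back on $|\mathcal B(\Gamma;\vartheta)|\le\mathcal M(\Gamma)\ll\log\log(2+|\vartheta|)$; only after that threshold does partial summation against the zero-free region for $L(s,\psi_K)$ at height $\asymp\vartheta$ give a controlled tail. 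You flag the zero-free-region issue, but since uniformity in $\vartheta$ is precisely what the lemma certifies, this splitting and the corresponding uniformity in the PNT error term should be written out rather than alluded to.
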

\begin{proof}
For $j=1,-1$ we let 
\[
S_j(\Gamma;\vartheta)
:=
\sum_{\substack{\n\fp\leq \Gamma\\ \fp \in \cP_K^\circ,
\psi_K(\fp)=j}}\frac{|1+\psi_K(\fp)\n\fp^{i\vartheta}|^2}{\n\fp}
\]
so that 
$S(\Gamma;\vartheta)=\sum_{j\in \{1,-1\}}S_j(\Gamma;\vartheta)+O(1)$.
Introduce
the functions $g_j:\N\to\Z_{\geq 0}$ via
\[
g_j(n):=\#\{\fa\in \cP_K^\circ:\n\fa=n,\psi_K(\fa)=j\}
\]
and
note that the condition 
$\fp \in \cP_K^\circ$ forces $\n\fp$ to be a rational prime. 
The quantitative version of Chebotarev's theorem
provides 
positive constants $c',\eta'$ such that 
\[
\sum_{p\leq \Gamma} g_j(p) 
=\sum_{\substack{\n\fp\leq \Gamma \\ \fp \in \cP_K^\circ,
\psi_K(\fp)=j}}1
=
\frac{\rm{li}(\Gamma)}{2}+O(\Gamma\rm{e}^{-c'(\log \Gamma)^{\eta'}})
,\]
due to the standard bound
\[
\sum_{\substack{\n\fp \leq \Gamma\\f_{\fp}>1}}1\ll_{\epsilon,K}  {\Gamma^{\frac{1}{2}+\epsilon}}
,\]
valid for all $\epsilon>0$. 
Hence, in the notation of~\cite[Lem.III.4.13]{MR3363366},
we can use $h(r):=|1+\rm{e}^{ir}|^2$ and directly
modify its proof to show
that 
for each $w<\Gamma$
the following equality holds uniformly in $\vartheta\neq 0$,
\[
\sum_{\substack{w<\n\fp\leq \Gamma
\\
\fp \in \cP_K^\circ
\\ \psi_K(\fp)=j}}
\frac{|1+j\n\fp^{i\vartheta}|^2}{\n\fp}
=
\sum_{w<p\leq \Gamma}
|1+j p^{i\vartheta}|^2
\frac{g_j(p)}{p}
=
\log\l(\frac{\log \Gamma}{\log w}\r)
+O\l(
\frac{1}{|\vartheta|\log w}+\frac{1+|\vartheta|}{\rm{e}^{c^{''}(\log w)^{\eta'}}}
\r)
,\] owing to $\overline{h}=2$ for our choice of $h$.
This equality is parallel to~\cite[Eq.(3.16)]{MR2927803},
our proof can thus be concluded as the one of~\cite[Lem.2.5]{MR2927803} 
by using it for suitable parameters $w$ according to the value of $\vartheta$ in relation to $\Gamma$.
\end{proof}

\begin{lemma}
\label{lem:chicken}
For all 
$t$,
$m$ as in Proposition~\ref{p:central}
and $\widehat\sigma\in \R \cap(0,\frac{1}{4})$, we have 
\[
\c{D}_{t-1,m}(\widehat\sigma)
\ll 
\frac{t-1}{\widehat\sigma\l(1-\frac{1}{2m}\r)^{t-1}}
.\]
\end{lemma}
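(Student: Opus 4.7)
The plan is to adapt the proof of the analogous bound in~\cite{MR2927803} to the number-field setting, substituting Lemma~\ref{lem:2bmv} for the rational-number counterpart used by La Bret\`eche and Tenenbaum.

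First, I would decompose each square-free $\fn \in \cP_K^\circ$ with $\omega_K(\fn)\geq t-1$ uniquely as $\fn=\fa\fm$, where $\fa=\fn_{t-1}\in\cP_K^\circ$ has exactly $t-1$ prime divisors and $\fm\in\cP_K^\circ$ is square-free, coprime to $\fa$, with every prime divisor succeeding $\fp^+(\fa)$ in the ordering~\eqref{eq:ordering}. The sum over such $\fm$ is bounded by the Euler product $\prod_{\fp\succ\fp^+(\fa)}(1+\n\fp^{-1-\widehat\sigma})$, which upon division of $\zeta_K(1+\widehat\sigma)\asymp\widehat\sigma^{-1}$ by the corresponding partial product over $\fp\preceq\fp^+(\fa)$ and invocation of Mertens' theorem for $K$ (exactly as in the lines leading to~\eqref{eq:mcbr}) yields $\ll 1/\big(\widehat\sigma\log\n\fp^+(\fa)\big)$. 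This reduces the estimate to bounding
\[
\frac{1}{\widehat\sigma}\sum_{\substack{\fa\in\cP_K^\circ\\ \omega_K(\fa)=t-1}}\frac{\mu_K(\fa)^2\,\tau_K^*(\fa;\psi_K)}{\n\fa^{1+\widehat\sigma}(\log\n\fp^+(\fa))^{(2m-1)/m}}.
\]

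Next, I would insert definition~\eqref{eq:ramanujan} of $\tau_K^*(\fa;\psi_K)$, interchange the $\vartheta$-integral with the sum over $\fa$, and use the factorisation
\[
\frac{\mu_K(\fa)^2|\tau_K^*(\fa;\psi_K;\vartheta)|^2}{\n\fa^{1+\widehat\sigma}}=\prod_{\fp\mid\fa}\frac{|1+\psi_K(\fp)\n\fp^{i\vartheta}|^2}{\n\fp^{1+\widehat\sigma}},
\]
valid for square-free $\fa\in\cP_K^\circ$. Singling out the distinguished factor $\fp^+=\fp^+(\fa)$ and bounding the symmetric sum over the remaining $t-2$ primes (all lying before $\fp^+$ in the ordering) by $[S(\n\fp^+;\vartheta)+O(1)]^{t-2}/(t-2)!$, with $S$ as in Lemma~\ref{lem:2bmv}, leaves only a sum over $\fp^+$ weighted by the corresponding factor for $\fp^+$ together with the decay $(\log\n\fp^+)^{-(2m-1)/m}$.

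The final step is an explicit integral evaluation. Lemma~\ref{lem:2bmv} provides the uniform estimate $S(\Gamma;\vartheta)\leq 2\log\log\Gamma+B\log\log(2+|\vartheta|)+O(1)$. Turning the sum over $\fp^+$ into an integral via the prime number theorem for $K$ and substituting $v=\log\log\n\fp^+$ brings matters to the gamma-type integral
\[
\int_0^\infty(2v+\beta)^{t-2}\rm{e}^{-(2-1/m)v}\,\rm{d}v\ll \rm{e}^{\beta(1-1/(2m))}\,\frac{2^{t-2}(t-2)!}{(2-1/m)^{t-1}},
\]
with $\beta=B\log\log(2+|\vartheta|)+O(1)$. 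Dividing by $(t-2)!$ and using the identity $2^{t-2}/(2-1/m)^{t-1}=\big[2(1-1/(2m))^{t-1}\big]^{-1}$ extracts the decisive power $(1-1/(2m))^{-(t-1)}$. Integrating the surviving factor $\rm{e}^{\beta(1-1/(2m))}\asymp(\log(2+|\vartheta|))^{B(1-1/(2m))}$ against the Cauchy weight $(6+\vartheta^2)^{-1}$ produces an $O(1)$ contribution uniformly in $m$ and $t$, and combining everything with the preliminary $\widehat\sigma^{-1}$ factor yields the target bound (the $(t-1)$ on the right of the claim absorbing any linear-in-$t$ slack arising from the lower-order terms).

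The main obstacle is controlling the $|\vartheta|>1$ regime, where the extra $B\log\log(2+|\vartheta|)$ term in Lemma~\ref{lem:2bmv} is unbounded: this is precisely what the $(6+\vartheta^2)^{-1}$ weight built into the definition~\eqref{eq:ramanujan} of $\tau_K^*(\fa;\psi_K)$ was designed to tame. A secondary technical point is that the inner Euler product bound $\prod_{\fp\succ\fp^+(\fa)}(1+\n\fp^{-1-\widehat\sigma})\ll \n\fp^+(\fa)^{\widehat\sigma}/(\widehat\sigma\log\n\fp^+(\fa))$ carries an extra $\n\fp^+(\fa)^{\widehat\sigma}$ factor that must be absorbed against $\n\fa^{-\widehat\sigma}$ using $\n\fp^+(\fa)\leq\n\fa$; the resulting mild loss of a factor $\n\fa^{\widehat\sigma}$ inside the sum over $\fa$ is readily handled since all effective contributions come from $\n\fa\leq\rm{e}^{O(1/\widehat\sigma)}$.
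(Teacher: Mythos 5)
Your proposal is correct and follows essentially the same route as the paper: the decomposition $\fn = \fn_{t-1}\cdot(\text{tail})$, the Euler-product tail estimate combined with Mertens' theorem for $K$ (absorbing the $\n\fp^+(\fa)^{\widehat\sigma}/\n\fa^{\widehat\sigma}\leq 1$ factor that you flag at the end), the multiplicativity of $\tau_K^*(\cdot;\psi_K;\vartheta)$ on squarefree ideals together with the $1/(t-2)!$ symmetrisation over the first $t-2$ primes, and then the appeal to Lemma~\ref{lem:2bmv} and the prime ideal theorem to reduce to a gamma-type integral in $v=\log\log\n\fp^+$. The one minor divergence is in the $\vartheta$-integral: you use the uniform bound $S(\Gamma;\vartheta)\leq 2\log\log\Gamma + B\log\log(2+|\vartheta|)+O(1)$ for all $\vartheta$, whereas the paper splits into $0<|\vartheta|\leq 1$ and $|\vartheta|>1$ and invokes the corresponding computations from \cite{MR2927803}; your streamlined version in fact yields $\c{D}_{t-1,m}(\widehat\sigma)\ll \widehat\sigma^{-1}(1-\tfrac{1}{2m})^{-(t-1)}$, which is a factor $t-1$ better than what the lemma asserts and is therefore amply sufficient.
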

\begin{proof}
Fix an element $\fm\in \cP_K^\circ$.
The integral ideals $\fn$ in $\c{D}_{t-1,m}(\widehat\sigma)$ 
with $\fn_{t-1}=\fm$ 
are of the shape
$\fn=\fm\fd$, where $\fd$ is square-free
and furthermore if
$\fp_i|\fd$ then $i>i^+(\fm)$.
Hence, 
\[
\c{D}_{t-1,m}(\widehat\sigma)
\leq
\sum_{\substack{
\fm \in \cP_K^\circ
\\ \omega_K(\fm)= t-1}}
\frac{\mu_K(\fm)^2\tau_K^*(\fm;\psi_K)}{\n\fm^{1+\widehat\sigma}(\log \n\fp^+(\fm))^{\frac{m-1}{m}}}
\prod_{i>i^+(\fm)}\l(1+\frac{1}{\n\fp_i^{1+\widehat\sigma}}\r)
.\]
The last product is 
\[
\ll \zeta_K(1+\widehat\sigma)\exp\Big(-\sum_{i\leq i^+(\fm)}\n\fp_i^{-1-\widehat\sigma}\Big)
\ll \widehat\sigma^{-1}\exp\Big(-\sum_{i\leq i^+(\fm)}\n\fp_i^{-1-\widehat\sigma}\Big)
.\]
The 
inequality 
$
1-\n\fp_i^{-\widehat\sigma}
\leq \widehat\sigma \log \n\fp_i
$
reveals that  
\[
\sum_{i\leq i^+(\fm)}\n\fp_i^{-1-{\widehat\sigma}}
\geq 
\sum_{i\leq i^+(\fm)}\frac{1}{\n\fp_i}
-{\widehat\sigma}
\sum_{i\leq i^+(\fm)}\frac{\log \n\fp_i}{\n\fp_i}
,\]
which, by the prime number theorem for $K$, is 
$
\ll\log \log \n\fp^+(\fm)-{\widehat\sigma} \log \n\fp^+(\fm)
$.
We can therefore bound 
$\c{D}_{t-1,m}(\widehat\sigma)$ by 
\[
\ll
\frac{1}{\widehat\sigma}
\sum_{\substack{\fm \in \cP_K^\circ\\ \omega_K(\fm)= t-1}}
\frac{\mu_K(\fm)^2\tau_K^*(\fm;\psi_K)}{\n\fm(\log \n\fp^+(\fm))^{\frac{2m-1}{m}}}
\frac{\n\fp^+(\fm)^{\widehat\sigma}}{\n\fm^{\widehat\sigma}}
\]
and, letting
\[
T:=\sum_{\substack{\fm \in \cP_K^\circ \\ \omega_K(\fm)= t-1}}
\frac{\mu_K(\fm)^2\tau_K^*(\fm;\psi_K)}{\n\fm(\log \n\fp^+(\fm))^{\frac{2m-1}{m}}}
,\]
allows us to deploy
the inequality 
$\n\fp^+(\fm)\leq \n\fm $ to infer that 
$\c{D}_{t-1,m}(\widehat\sigma) \ll
T/\widehat\sigma$.
For $\vartheta \in \R$ let
\[T(\vartheta):= 
\sum_{\substack{\fm\subset \cP_K^\circ
\\ \omega_K(\fm)= t-1}}
\frac{\mu_K(\fm)^2|\tau_K^*(\fa;\psi_K;\vartheta)|^2}{\n\fm(\log \n\fp^+(\fm))^{\frac{2m-1}{m}}}
.\]
Note that
alluding to~\eqref{eq:ramanujan} and using
$\tau^*(\fm,\psi;-\vartheta)=\overline{\tau^*(\fm,\psi;\vartheta)}$ provides us with 
\[
T
\ll \int_0^\infty
T(\vartheta)
(1+\vartheta^2)^{-1}
\mathrm{d}
\vartheta
.\]
Denote
$\fp'=\fp^+(\fm)$.
Each ideal $\fm$ in $T(\vartheta)$
is the product of $\fp'$ and $k-1$ different prime ideals $\fp_i \in \cP_K^\circ$ 
that satisfy $i<i^+(\fm)$. 
Therefore 
\[
T(\vartheta)
\ll 
\frac{1}{(t-2)!}
\sum_{\fp' \in \cP_K^\circ}
\frac{|\tau_K^*(\fp';\psi;\vartheta)|^2}{\n\fp'(\log \n\fp')^{\frac{2m-1}{m}}}
\l(\sum_{\substack{\n\fp\leq \n \fp'\\\fp \in \cP_K^\circ}}
\frac{|\tau_K^*(\fp;\psi_K;\vartheta)|^2}{\n\fp}
\r)^{t-2}
\]
and using Lemma~\ref{lem:2bmv}
allows us to follow 
the arguments proving~\cite[Eq.(2.25),(2.26)]{MR2927803} 
to acquire the bound
\[
\int_0^1
T(\vartheta)\rm{d}\vartheta
\ll
\frac{2^t}{(t-2)!}
\bigg\{
\frac{(t-2)!}{(\frac{2m-1}{m}+1)^{t-1}}+
\frac{(t-1)!}{(\frac{2m-1}{m})^{t-1}}+
\frac{(t-1)!}{(\frac{2m-1}{m}+1)^{t-1}}
\bigg\}
\ll
\frac{t-1}{(1-\frac{1}{2m})^{t-1}}
.\]
In the remaining range, $\vartheta>1$, 
one can conjure up Lemma~\ref{lem:2bmv} and the proof of~\cite[Eq.(2.27)]{MR2927803} 
to deduce the estimate 
\[
T(\vartheta)\ll 
\l(1-\frac{1}{2m}\r)^{-(t-1)}
\{\log(2+\vartheta)\}^{\frac{B}{1-\frac{1}{2m}}} 
,\]
which, in light of
\[\int_{1}^\infty
\{\log(2+\vartheta)\}^{\frac{B}{1-\frac{1}{2m}}} 
\vartheta^{-2}
\rm{d}\vartheta
\ll_B 1
,\]
is sufficient for our lemma.
\end{proof}
Assorting all appropriate estimates obtained
so
far
validates 
\beq{eq:fopf}
{
L^*_{t,m}(\sigma)-4^{\frac{1}{m}}L^*_{t-1,m}(\sigma)
\ll
\frac{L^*_{t-1,q}(\sigma)^{\frac{m-2}{m-1}}(t-1)^{\frac{1}{m-1}}}{\sigma^{\frac{1}{m-1}} \l(1-\frac{1}{2m}\r)^{\frac{t-1}{m-1}}}
+
\frac{1}{\sigma(t!)^{2/3}}
.}
Bringing into play the entity
\[
L_{k,q}^*(\sigma):=
L_{k,q}(\sigma)
+\frac{4^{\frac{k}{q}}}{\sigma}
\] 
and noting that 
\[
L_{k+1,q}(\sigma)-4^{\frac{1}{q}}L_{k,q}(\sigma)
=L_{k+1,q}^*(\sigma)-4^{\frac{1}{q}}L^*_{k,q}(\sigma) 
,
\frac{4^{\frac{t-1}{m}}}{\sigma}\leq L_{t-1,m}^*(\sigma)
\]
allows to gain via
~\eqref{eq:fopf} the following inequality,
\[
L_{t,m}^*(\sigma)
\leq
L_{t-1,m}^*(\sigma)
\l(
4^{\frac{1}{m}} 
+
\frac{
(t-1)^{\frac{1}{m-1}}}{
4^{\frac{t}{m(m-1)}}
\l(1-\frac{1}{2m}\r)^{\frac{t-1}{m-1}}}
+
\frac{1}{4^{\frac{t-1}{m}}(t!)^{2/3}}
\r)
.\]
Using the fact $m=f(t)+O(1)$ shows that the middle term in the parenthesis is 
\[\frac{1}{t^{{\log 4}-\frac{1}{2}+o(1)}}.\]
Hence, there exists $c_3>0$, depending at most 
on $K$ and $\psi_K$ such that 
\[
L_{t,m}^*(\sigma)
\leq
\rm{e}^{\frac{c_3}{m}}
L_{t-1,m}^*(\sigma)
,\]
an estimate that
concludes the proof of Proposition~\ref{p:central}.

\bibliographystyle{amsalpha}
\bibliography{deltachar}
\end{document}